\documentclass[12pt]{amsart}

\usepackage{scrtime}

\usepackage{mathrsfs}
\usepackage{amsfonts}
\usepackage{amssymb}
\usepackage{amsfonts, amscd, amsmath, mathrsfs, amssymb, amsthm, amsxtra, bbding, epsfig, graphicx, latexsym, url}
\usepackage[papersize={8.3in,11.8in},textwidth=16cm,textheight=24cm,centering]{geometry}
\usepackage{enumerate}
\usepackage{caption} 

\usepackage{graphicx}
\usepackage{tikz}
\usepackage{float} 

\usepackage{todonotes}

\usepackage{xcolor}
\definecolor{cite}{rgb}{0.00,0.60,1.00}
\definecolor{url}{rgb}{1.00,0.10,0.80}
\definecolor{link}{rgb}{0.00,0.00,1.00}
\usepackage[colorlinks,linkcolor=link,urlcolor=url,citecolor=cite,pagebackref,breaklinks]{hyperref}

\hypersetup{
pdfstartpage=1,
pdfstartview=FitH}

\def\\{\cr}
\def\({\left(}
\def\){\right)}

\DeclareFontFamily{U}{mathx}{\hyphenchar\font45}
\DeclareFontShape{U}{mathx}{m}{n}{
      <5> <6> <7> <8> <9> <10>
      <10.95> <12> <14.4> <17.28> <20.74> <24.88>
      mathx10
      }{}
\DeclareSymbolFont{mathx}{U}{mathx}{m}{n}
\DeclareMathAccent{\widecheck}{\mathalpha}{mathx}{"71}

\numberwithin{equation}{section}

\allowdisplaybreaks

\newtheorem{theorem}{Theorem}[section]
\newtheorem{lemma}[theorem]{Lemma}

\newtheorem{corollary}[theorem]{Corollary} 

\newtheorem{remark}[theorem]{Remark}

\makeatletter
\newcounter{roem}
\renewcommand{\theroem}{\Roman{roem}}

\newcommand{\c@org@eq}{}
\let\c@org@eq\c@equation
\newcommand{\org@theeq}{}
\let\org@theeq\theequation

\newcommand{\setroem}{
\let\c@equation\c@roem
 \let\theequation\theroem}

\newcommand{\setarab}{
\let\c@equation\c@org@eq
\let\theequation\org@theeq}
\makeatother

\newtheorem*{claim*}{Claim}

\newcommand{\GL}{\mathrm{GL}}

\newcommand{\Disc}{\mathrm{Disc}}

\usepackage{scalerel}

\renewcommand{\geq}{\geqslant}
\renewcommand{\leq}{\leqslant}
\renewcommand{\ge}{\geqslant}
\renewcommand{\le}{\leqslant}

\DeclareMathOperator{\Mod}{mod}

\renewcommand{\bmod}[1]{\,(\Mod{ #1})}

\newcommand{\F}{\mathbb{F}}
\newcommand{\N}{\mathbb{N}}

\newcommand{\R}{\mathbb{R}}
\newcommand{\Z}{\mathbb{Z}}

\newcommand{\balpha}{\boldsymbol{\alpha}}
\newcommand{\bbeta}{\boldsymbol{\beta}}

\newcommand{\btheta}{\boldsymbol{\vartheta}}

\newcommand{\cA}{\mathcal{A}}
\newcommand{\cB}{\mathcal{B}}
\newcommand{\cC}{\mathcal{C}}
\newcommand{\cD}{\mathcal{D}}
\newcommand{\cE}{\mathcal{E}}

\newcommand{\cI}{\mathcal{I}}

\newcommand{\cQ}{\mathcal{Q}}

\newcommand{\ff}{\mathfrak{f}}

\newcommand{\fC}{\mathfrak{C}}

\newcommand{\fR}{\mathfrak{R}}
\newcommand{\fS}{\mathfrak{S}}

\newcommand{\fY}{\mathfrak{Y}}

\newcommand\e{{\mathbf{\,e}}}

\def\ssum{\mathop{\sum\ldots \sum}}

\def\mand{\qquad\mbox{and}\qquad}

\def\le{\leqslant}
\def\leq{\leqslant}
\def\ge{\geqslant}
\def\geq{\geqslant}

\usepackage{graphicx}
\usepackage{tikz}

\begin{document}

\title[Tridimensional character sums and applications] 
{Tridimensional character sums with polynomial arguments and applications}

\author[\'E. Fouvry] {\'Etienne Fouvry}
\address{D{\'e}partement de Math{\'e}matiques, Universit{\'e} Paris-Saclay,
91405 Orsay Cedex, 
France}
\email{etienne.fouvry@universite-paris-saclay.fr}

\author[I. E. Shparlinski] {Igor E. Shparlinski}
\address{School of Mathematics and Statistics, University of New South Wales, Sydney NSW 2052, Australia}
\email{igor.shparlinski@unsw.edu.au}

\author[P. Xi]{Ping Xi}

\address{School of Mathematics and Statistics, Xi'an Jiaotong University, Xi'an 710049, P. R. China}
\email{ping.xi@xjtu.edu.cn}

\begin{abstract} 
Let $p$ be a large prime and $\chi$ a non-trivial Dirichlet character modulo $p$. We study the character sum
\[
\sum_{a \sim A} \sum_{b \sim B} \sum_{c \sim C} \alpha(a,b) \beta(c)\chi(f(a) + bc),
\]
where  $z \sim Z$ means $Z\le z < 2Z$,  $f \in \Z[X]$ is of small degree $k$, and $\balpha=(\alpha(a,b))_{a\sim A,b\sim B}$ 
and $\bbeta=(\beta(c))_{c\sim C}$ are two complex coefficients. We prove non-trivial upper 
bounds for this sum in either of the two cases:
\begin{itemize}
\item $\bbeta\equiv1$, $k=2,3,4,5$ and $A,B,C>p^{\frac{1}{8}+\varepsilon}$, 
\item $\bbeta$ general, $k=2,3$ and $A,B,C>p^{\frac{1}{6}+\varepsilon}$,
\end{itemize}
where $\varepsilon>0$ is fixed.

This work was originally motivated by an intermediate result of Ganguly and Rajan (2023) on counting $2\times2$ matrices over $\F_p$ with irreducible characteristic polynomials, where the entries are in short segments. The new bounds here allow us to count such matrices in much shorter segments.
\end{abstract}

\keywords{Character sum over polynomials}
\subjclass[2020]{11L10, 11L40}

\date{\today,\ \thistime}

\maketitle

\tableofcontents

\section{Introduction and statement of the results} 
 
 \subsection{Set-up}
 
Let $p$ be a large prime and $\chi$ a non-trivial Dirichlet character modulo $p$. Give a polynomial $f \in \Z[X]$ and  real $A,B,C > 1$. For two complex coefficients $\balpha=(\alpha(a,b))_{a\sim A,b\sim B}$ and $\bbeta=(\beta(c))_{c\sim C}$,
we consider the character  sum
\begin{equation}\label{eq:charactersum}
\cC_{f,\chi}(\balpha,\bbeta) =\sum_{a \sim A} \sum_{b \sim B} \sum_{c \sim C} \alpha(a,b) \beta(c)\chi(f(a) + bc),
\end{equation}
where $a \sim A$ means $A \le a < 2A$ and similarly for $b \sim B$ and $c \sim C$. 
The variables $a,b$ are attached with a general coefficient $\balpha$, although they are separated in the character. This explains we say unbalanced in the title.

The study of $\cC_{f,\chi}(\balpha,\bbeta)$ is a continuation of our work~\cite{FSX25}, in which we deal with the case of linear polynomial $f$.
This may remind you some pioneering works by Karatsuba~\cite{Ka92}, Friedlander and Iwaniec~\cite{FI93} on the double character sum
\[
\sum_{a\in\cA}\sum_{b\in \cB}\chi(a+b),
\]
where $\chi$ is a non-trivial Dirichlet character modulo $p$, and $\cA,\cB$ are structured sets (e.g., intervals) or quite general subsets in $\Z$. We refer the readers to our preceding paper~\cite{FSX25} for more precise references.

If one of $A,B,C$ in~\eqref{eq:charactersum} is larger than $p^{1/2+\varepsilon}$, one can employ the Cauchy--Schwarz inequality to smooth out one variable, and Weil's bound for complete character sums applies after a completing argument in the P\'olya--Vinogradov method, producing  a non-trivial upper bound for $\cC_{f,\chi}(\balpha,\bbeta)$.
On the other hand, we can also appeal to Burgess's bound as soon as $\beta(c)\equiv1$ for $c\sim C$, in which case we can bound $\cC_{f,\chi}(\balpha,\bbeta)$ non-trivially for $C>p^{1/4+\varepsilon}$.
The aim of this paper is, by introducing some variants of the original Burgess method, to give non-trivial bounds for $\cC_{f,\chi}(\balpha,\bbeta)$ under mild conditions on the weights $\balpha,\bbeta$, and with $A,B,C$ as small as possible compared to $p$.
Note that the case of $f(X) = X$ corresponds to the character sums estimated 
in~\cite[Theorems~2.1 and~2.5]{FSX25}.  Hence we always assume that $\deg f \ge 2$.
 
Since we are mostly interested in short sums, for convenience, throughout this work  we always  make the following assumption:
{\it The  positive parameters $A$, $B$ and $C$ satisfy 
\begin{equation}\label{eq:Cond ABC}
A^k \le \gamma_f p, \qquad A  \ge 1/\gamma_f,   \qquad  BC\le p/73, 
\end{equation}
for some sufficiently small constant $\gamma_f$ depending only on $f\in \Z[X]$ and $k=\deg f\ge 2$.\/}

 We frequently use a trivial observation that~\eqref{eq:Cond ABC}, assuming  that $\gamma_f$ is small enough, 
implies $p\nmid f(a)$, and also $p\nmid (f(a)+bc)$ when we have $f(a)+bc\neq0$, for all $a\sim A,b\sim B$ and $c\sim C$.  

We are ready to formulate our first result
for a constant weight $\bbeta = \mathbf 1$,  that is, for $\beta(c) =1$, $c \sim C$.

\begin{theorem}\label{thm:TypeI}
Let $f\in \Z[X]$ be a fixed  polynomial of degree $k \ge 2$ and 
let  $A,B,C\geqslant2$ satisfy~\eqref{eq:Cond ABC}  with a sufficiently small $\gamma_f > 0$. 
Assume that $r$ is an arbitrary fixed positive integer and $C > 4p^{\frac{1}{2r}}$. 
For  $\|\balpha\|_{\infty} \leqslant1 $ and $\bbeta = \mathbf 1$  we have
\[
|\cC_{f,\chi} (\balpha,\bbeta)|\le ABCp^{o(1)}\cdot \Delta_1,
\]
where
\[
\Delta_1= (AB)^{-\frac{1}{2r}}C^{-\frac{1}{r}}p^{\frac{r+1}{4r^2}}\(\frac{A}{B}+1\)^{\frac{1}{2r}} \( \frac{(A^k+BC)BC}{p^{1+\frac{1}{2r}}} + 1\)^{\frac{1}{2r}}.
\]
\end{theorem}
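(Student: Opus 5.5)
This is a Burgess-type argument: shift $c$ by products $uv$, then use Hölder's inequality with $2r$-th moments, so that the problem reduces to counting points $(a,b,u)$ producing the same value of $(f(a)+bu)/b$, and to the Burgess moment bound for complete character sums.

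\textbf{Step 1 (shifting).} Choose parameters $U,V\ge1$ with $UV\le C/4$. For $u\sim U$ and $v\sim V$, the interval $c\sim C$ differs from its shift $c+uv$ by $O(UV)$ terms. Hence
\[
\cC_{f,\chi}(\balpha,\mathbf 1)=\frac{1}{UV}\sum_{u\sim U}\sum_{v\sim V}\sum_{a,b}\alpha(a,b)\sum_{c\sim C}\chi(f(a)+bc+buv)+O(ABUV).
\]
Let $\bar b$ denote the inverse of $b$ modulo $p$. Since $\chi(f(a)+bc+buv)=\chi(b)\chi((f(a)+bc)\bar b+uv)$, write $x=(f(a)+bc)\bar b \bmod p$. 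For each residue $x$, let $\nu(x)$ be the weighted number of representations of $x$ in this form, weighted by $|\alpha(a,b)|$, and of $x\bar u$ for $u\sim U$; here I use the rescaled form $\chi(u)\chi(x\bar u+v)$. The sum is then bounded by
\[
\frac{1}{UV}\sum_{y\bmod p}\nu(y)\Bigl|\sum_{v\sim V}\chi(y+v)\Bigr|,
\]
where $\nu(y)$ counts quadruples $(a,b,c,u)$ with $(f(a)+bc)\equiv buy\bmod p$.

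\textbf{Step 2 (Hölder).} Apply Hölder's inequality with exponents $1-\frac1{2r}$ and $\frac1{2r}$. The inner factor is the standard Burgess moment
\[
\sum_y\Bigl|\sum_{v\sim V}\chi(y+v)\Bigr|^{2r}\ll V^{2r}p^{1/2}+V^rp,
\]
which comes from Weil's bound. The trivial moment is $\sum\nu=ABCU$. It remains to bound $\sum\nu^2$, the number of solutions to
\[
(f(a_1)+b_1c_1)\,b_2u_2\equiv (f(a_2)+b_2c_2)\,b_1u_1 \bmod p .
\]
Then
\[
\sum\nu^{2r/(2r-1)}\le\Bigl(\sum\nu\Bigr)^{(2r-2)/(2r-1)}\Bigl(\sum\nu^2\Bigr)^{1/(2r-1)}.
\]

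\textbf{Step 3 (the energy count; main obstacle).} Both sides of the congruence have size $\ll (A^k+BC)BU$. If this is less than $p$, the congruence lifts to an equation over $\Z$. If it is larger, the number of integer multiples of $p$ involved is $\ll (A^k+BC)BU/p+1$. I expect this to be the source of the last factor in $\Delta_1$, once $U$ is chosen so that $BCU\approx BC\,p^{1/(2r)}$, i.e. $U\approx p^{1/(2r)}$, $V\approx C p^{-1/(2r)}$; this choice is what requires $C>4p^{1/(2r)}$.

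For each fixed lifted equation $(f(a_1)+b_1c_1)b_2u_2=(f(a_2)+b_2c_2)b_1u_1+mp$, I would fix $(a_1,b_1,c_1,u_1)$ and count $(a_2,b_2,c_2,u_2)$ using divisor bounds. The delicate point is the diagonal-type solutions, where $b_2u_2$ and $b_1u_1$ share large factors. After fixing $a_1,a_2$, the number of $(b_i,c_i,u_i)$ should be about $BCU\,p^{o(1)}$ times $(1+A/B)$ per $a$-pair. The factor $(A/B+1)$ arises from counting $a_2$ with $f(a_2)\equiv f(a_1)$ modulo a divisor of size about $B$, i.e. modulo $b_1$ or $b_2$; since $f$ is fixed of degree $k$, there are $O(A/B+1)$ such $a_2$ for each residue class. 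This yields
\[
\sum\nu^2\le ABCU\,(A/B+1)\bigl((A^k+BC)BC/p^{1+1/(2r)}+1\bigr)p^{o(1)}.
\]
Inserting this into Hölder, and checking that the error term $ABUV$ is dominated, should produce $\Delta_1$ after routine optimization.
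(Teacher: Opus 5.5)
Your skeleton is the paper's: the shift $c\to c+uv$, H\"older in the form $\fS_1^{1-1/r}(\fS_2\fS_3)^{1/(2r)}$, the Davenport--Erd\H{o}s moment bound, and a second-moment count. The bound you write for $\sum\nu^2$ is its Lemma~\ref{lm:rho-2moment}, with $U=\frac14Cp^{-1/(2r)}$. The main gap is Step~3. It carries all the arithmetic, is only sketched, and the counting claim it rests on is false.

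The number of $a_2\sim A$ with $f(a_2)$ in a prescribed class modulo some $b\sim B$ is not $O(A/B+1)$ ``since $f$ is fixed''. Modulo non-squarefree $b$ a fixed polynomial congruence can have many roots: e.g.\ $x^2\equiv0\bmod{\ell^2}$ has $\ell$ roots, and $x^k\equiv0\bmod{\ell^k}$ has $\ell^{k-1}$. So pointwise you only get $(A/b+1)J_k(b)$, up to the content of $f$. The paper's extra ingredient is that this is harmless on average: $\sum_{b\sim B}J_k(b)\le B^{1+o(1)}$ (Lemma~\ref{lm:Aver Jkq}, from Konyagin's bound for prime powers and the Erd\H{o}s--Szekeres count of powerful numbers).

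To use this, the modulus must stay a free summation variable, and that dictates the order of counting in $(f(a_1)+b_1c_1)b_2u_2=(f(a_2)+b_2c_2)b_1u_1+rp$:
\begin{enumerate}[(i)]
\item fix $a_1,b_2,u_2,r$; reducing modulo $b_1$ gives $b_1\mid f(a_1)b_2u_2-rp\neq0$;
\item then fix $c_1$, so $u_1\mid (f(a_1)+b_1c_1)b_2u_2-rp\neq0$, and $b_1,u_1$ cost only $p^{o(1)}$;
\item now $t=f(a_2)+b_2c_2$ is known; count $a_2$ via $f(a_2)\equiv t\bmod{b_2}$, after which $c_2$ is determined;
\item summing over $b_2$ gives $(A+B)p^{o(1)}$.
\end{enumerate}
In your order (fix $(a_1,b_1,c_1,u_1)$, count $(a_2,b_2,c_2,u_2)$), $b_2$ ends up essentially determined by the fixed data, so this averaging is not directly available. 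Also, your ``per $a$-pair'' count, summed over the $A^2$ pairs, exceeds your own claimed total by a factor $A$.

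Two further points stop you from reaching $\Delta_1$ as set up.

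First, the parameter roles are reversed. $V$ is the length of the inner sum, so the moment $V^rp+V^{2r}p^{1/2}$ is balanced at $V=p^{1/(2r)}$, whence $U=\frac14Cp^{-1/(2r)}$. This choice is what turns $(A^k+BC)BU/p$ into the $(A^k+BC)BC/p^{1+1/(2r)}$ you write. It also makes $U^{-1/(2r)}\asymp C^{-1/(2r)}p^{1/(4r^2)}$ combine with $(ABC)^{-1/(2r)}$ and $p^{1/(4r)}$ into $(AB)^{-1/(2r)}C^{-1/r}p^{(r+1)/(4r^2)}$. With $U\approx p^{1/(2r)}$ you get a bound that is in general weaker.

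Second, with $UV\asymp C$ the boundary error $O(ABUV)$ of a sharp shift is of the size of the trivial bound $ABC$, so it is not dominated. Taking $UV=\lambda C$ smaller inflates the main term by $\lambda^{-1/(2r)}$ when the last factor in $\Delta_1$ is $\asymp1$, and balancing then gives only about $\Delta_1^{2r/(2r+1)}$. The paper has no boundary term at all. It keeps $UV=C/4$, inserts (a weight mimicking) the indicator of $[C,2C)$ evaluated at $c+uv$, and separates its dependence on $v$ by Fourier inversion. This costs a factor $\log p$ and a twist $\e(\zeta v)$ in the inner sum, which is harmless because Lemma~\ref{lm:DavenportErdos} allows arbitrary weights of modulus at most $1$.
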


\begin{corollary}\label{cor:TypeI}
Let $\|\balpha\|_{\infty} \leqslant1 $, $\bbeta = \mathbf 1$ and  $f\in \Z[X]$ a fixed   polynomial of degree $k \ge 2$. 
For all  $A, B, C = p^{\eta+o(1)}$, with any fixed 
\begin{equation}\label{eq:Cond N1}
 \eta \in \left] \frac{1}{8}, \frac{1}{2(k-2)}\right[
\end{equation}
we have
\[
|\cC_{f,\chi} (\balpha,\bbeta) |
 \le ABCp^{-\delta}
\]
for some $\delta>0$, depending only on $\eta$, provided that $p$ is large enough.
\end{corollary}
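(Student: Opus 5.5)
We deduce Corollary~\ref{cor:TypeI} directly from Theorem~\ref{thm:TypeI}. Set $A=B=C=p^{\eta+o(1)}$ and choose $r$ large but fixed. First we check the hypotheses. Since $\eta<\frac{1}{2(k-2)}$ is only an upper restriction, we must verify~\eqref{eq:Cond ABC}. The condition $BC\le p/73$ holds because $2\eta<1$; note that $\frac{1}{2(k-2)}\le \frac12$ for $k\ge3$, and for $k=2$ the interval is read as $]\frac18,\infty[$, but we only need $\eta<\frac1k$-type constraints. The condition $A^k\le\gamma_f p$ requires $k\eta<1$. For $k\ge 4$ we have $\frac{1}{2(k-2)}\le \frac1k$, so this holds. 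For $k=2,3$ the upper bound in~\eqref{eq:Cond N1} may exceed $\frac1k$; there we restrict to the range where $A^k\le \gamma_f p$, or reduce to it by splitting $a$ into subintervals. The condition $C>4p^{1/(2r)}$ holds once $r>1/(2\eta)$.

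Next we evaluate $\Delta_1$. Since $A/B+1\asymp 1$, the factor $(A/B+1)^{1/(2r)}$ is $O(1)$. The main factor is
\[
(AB)^{-\frac1{2r}}C^{-\frac1r}p^{\frac{r+1}{4r^2}} = p^{-\frac{2\eta}{r}+\frac{1}{4r}+\frac{1}{4r^2}+o(1)}.
\]
The last factor is $\bigl((A^k+BC)BC\,p^{-1-\frac1{2r}}+1\bigr)^{\frac1{2r}}$. Here $(A^k+BC)BC=p^{\max(k,2)\eta+2\eta+o(1)}$, so the bracket is at most $1+p^{(k+2)\eta-1+o(1)}$ for $k\ge2$. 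Hence this factor contributes at most $p^{\frac{1}{2r}\max(0,(k+2)\eta-1)}$.

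Putting these together, the exponent of $\Delta_1$ is
\[
\frac1r\Bigl(-2\eta+\frac14+\frac{1}{4r}+\frac12\max\bigl(0,(k+2)\eta-1\bigr)\Bigr)+o(1).
\]
If $(k+2)\eta\le1$, this is negative when $\eta>\frac18+\frac1{8r}$, which holds for $r$ large since $\eta>\frac18$. Otherwise the bracket equals
\[
-2\eta+\frac14+\frac{k+2}{2}\eta-\frac12+\frac1{4r} = \frac{k-2}{2}\eta-\frac14+\frac1{4r},
\]
which is negative exactly when $\eta<\frac{1}{2(k-2)}-\frac{1}{2r(k-2)}$. Again this holds for large $r$ by~\eqref{eq:Cond N1}. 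So choosing $r$ large enough in terms of $\eta$ gives $\Delta_1\le p^{-2\delta}$, and we absorb the $p^{o(1)}$ loss.

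The only genuinely delicate step is checking $A^k\le\gamma_f p$ for small $k$. I would resolve it by noting that the term $A^k$ entering the final factor already encodes the obstruction, and by splitting into subintervals if needed. Everything else is exponent bookkeeping.
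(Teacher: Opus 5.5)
Your proposal is correct and follows the paper's proof: substitute $A,B,C=p^{\eta+o(1)}$ into $\Delta_1$, split according to which term realises the maximum, and take $r$ large in terms of $\eta$. The only difference in the bookkeeping is that you bound the last bracket using $p^{-1}$ instead of $p^{-1-\frac{1}{2r}}$. This costs $\frac{1}{4r^2}$ in the exponent in the second case (the paper keeps it and gets exactly $\zeta=\frac{1}{4r}-\frac{(k-2)\eta}{2r}$), which is harmless for large $r$.

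On the hypotheses, the paper treats~\eqref{eq:Cond ABC} as a standing assumption, so for $k=2,3$ one effectively works with $\eta<1/k$; this is your ``restrict to the range'' option. Your alternative of splitting $a\sim A$ into subintervals would not work, since $a$, and hence $f(a)$, keeps the same size.
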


\begin{remark}
Clearly the condition~\eqref{eq:Cond N1} limits the degree $k$ of $f$ to $2\leqslant k\leqslant5$. On the other hand, 
the upper bound in~\eqref{eq:Cond N1} becomes $\infty$ for $k=2$, which is relevant for our applications in Theorem~\ref{thm:cE(N,p)-asymptoticformula} below.
 \end{remark}

The next result deals with general coefficients $\balpha$ and $\bbeta$.

\begin{theorem}\label{thm:TypeII}
Let $f\in \Z[X]$ a fixed   polynomial of degree $k \ge 2$ and 
let  $A,B,C\geqslant2$ satisfy~\eqref{eq:Cond ABC}  with a sufficiently small $\gamma_f > 0$. 
Assume that $r$ is an arbitrary fixed positive integer and $C > 4p^{\frac{1}{r}}$. 
For   $\|\balpha\|_{\infty}, \|\bbeta\|_{\infty}  \leqslant1 $, we have
\[
|\cC_{f,\chi} (\balpha,\bbeta) | \le ABCp^{o(1)}\cdot \Delta_2,
\]
where
\[
\Delta_2=(ABC)^{-\frac{1}{2r}}p^{\frac{r+1}{4r^2}+o(1)}\(\frac{A}{B}+1\)^{\frac{1}{4r}} \( \frac{(A^k+BC)BC}{p^{1+\frac{1}{r}}} + 1\)^{\frac{1}{2r}}.
\]
\end{theorem}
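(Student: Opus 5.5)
We prove Theorem~\ref{thm:TypeII} with a Burgess-type shift that also shifts the weighted variable $c$. The shift has to keep the argument of $\chi$ in the form $f(a)+b(\cdot)$, so we shift $c$ multiplicatively in a way compatible with $b$. Concretely, we introduce auxiliary parameters $U,V$ with $UV$ small compared with $C$, take $u\sim U$ and $v\sim V$, and write $f(a)+bc$ as $f(a)+b(c+uv)$ up to shifting $c$.

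This is not quite available, because $\beta(c)$ is not shift invariant. The standard remedy for Type II sums is different: we apply H\"older's inequality in $(a,b)$ first and open the $2r$-th power of the sum over $c$. Thus
\[
|\cC|^{2r}\le (AB)^{2r-1}\sum_{a,b}\Big|\sum_{c}\beta(c)\chi(f(a)+bc)\Big|^{2r}.
\]
Since $\chi(f(a)+bc)=\chi(b)\chi(f(a)\bar b+c)$, the inner sum is a short weighted sum in $c$ evaluated at the point $\lambda=f(a)\bar b \bmod p$. Expanding the $2r$-th power gives
\[
\sum_{c_1,\dots,c_{2r}}\prod\beta\cdot\sum_{a,b}\prod_{i\le r}\chi(\lambda+c_i)\overline{\chi}(\lambda+c_{i+r}).
\]

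We then bound this by collecting the values of $\lambda$ with multiplicity. Let $J(\lambda)=\#\{(a,b):f(a)\equiv\lambda b\bmod p\}$. By Cauchy--Schwarz, the expression is at most
\[
\Big(\sum_\lambda J(\lambda)^2\Big)^{1/2}\Big(\sum_{\lambda\bmod p}\Big|\sum_{\mathbf c}\cdots\Big|^2\Big)^{1/2}.
\]

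The complete sum over $\lambda$ is handled by Weil's bound, as in Burgess's lemma. The diagonal tuples contribute $C^{2r}p$, and the remaining tuples contribute $C^{4r}p^{1/2}$. This gives the term $C^{2r}p+C^{4r}p^{1/2}$.

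The hypothesis $C>4p^{1/r}$ is where the real Burgess amplification enters. Rather than using the raw $c$, we first amplify: we replace $c$ by $c+uv$ with $u\sim U$ and $v\sim V$, using that $\beta$ ranges over an interval only after splitting. Alternatively, we perform the amplification on the $(a,b)$ side. Writing $\lambda=f(a)/b$, we multiply the variable $b$ by a small $t\sim T$, which leaves $\lambda$ essentially tracked by the pair $(a,bt)$. Thus we consider $\chi(f(a)+bc)=\chi(f(a)t+btc)$, and group according to $x=f(a)t$ and $y=bt$.

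The energy count $\sum J^2$ becomes the number of solutions of
\[
f(a_1)b_2\equiv f(a_2)b_1 \pmod p,
\]
and its amplified analogue. This congruence count is the source of the factor $\big((A^k+BC)BC/p^{1+1/r}+1\big)$ and of the factor $(A/B+1)$.

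The main obstacle is this counting step. We must lift the congruence $f(a_1)b_2\equiv f(a_2)b_1$, possibly with extra $c$-shifts, to an equation over $\Z$, which produces the term $A^kBC/p$, or else to a controlled number of residue classes. We then need divisor-bound counting for equations of the form $f(a_1)b_2=f(a_2)b_1+mp$, yielding $p^{o(1)}(AB)(A/B+1)$ per $m$. This is where \eqref{eq:Cond ABC} is used: it guarantees $p\nmid f(a)$ and that $f(a)+bc\neq0$ forces a unit.

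Finally, we collect the bounds, take the $2r$-th root, and optimise the amplification length, which is chosen of size $C p^{-1/r}$ scaled. The condition $C>4p^{1/r}$ ensures that this length is at least $1$, and the optimisation gives $\Delta_2$ with exponents $\frac{1}{4r}$ and $\frac{r+1}{4r^2}$.
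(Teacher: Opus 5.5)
Your proposal has a genuine gap: it never reaches a legitimate Burgess amplification in the presence of the arbitrary weight $\bbeta$, and without one the route you actually carry out gives a much weaker bound than $\Delta_2$. The part you make precise is correct: H\"older over $(a,b)$, grouping by $\lambda\equiv f(a)\overline{b}$, Cauchy--Schwarz, and Lemma~\ref{lm:DavenportErdos} with $2r$ replaced by $4r$. But since $\sum_\lambda J(\lambda)^2\le AB(1+A^kB/p)p^{o(1)}$, it only yields
\[
|\cC_{f,\chi}(\balpha,\bbeta)|^{2r}\le (ABC)^{2r}(AB)^{-1/2}\Bigl(1+\frac{A^kB}{p}\Bigr)^{1/2}\bigl(C^{-r}p^{1/2}+p^{1/4}\bigr)p^{o(1)}.
\]
This is nontrivial only when $AB>p^{1/2}$, i.e.\ $\eta>1/4$ for $A=B=C=p^{\eta}$. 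By contrast, $\Delta_2$ wins once $ABC$ exceeds roughly $p^{1/2+1/(2r)}$, which gives the threshold $1/6$.

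The two amplifications you then suggest do not work.
\begin{itemize}
\item Shifting $c\mapsto c+uv$ inside $\sum_c\beta(c)\chi(\lambda+c)$ turns the weight into $\beta(c+uv)$. This is not a function of $v$ alone, so the inner $v$-sum is no longer of the shape covered by Lemma~\ref{lm:DavenportErdos}, and no splitting makes a general $\bbeta$ constant on intervals.
\item Multiplying $b$ by $t$ is vacuous, because $\chi(f(a)t+btc)=\chi(t)\chi(f(a)+bc)$, so no new short additive variable appears.
\end{itemize}
Consistently with this, your energy $f(a_1)b_2\equiv f(a_2)b_1$ involves neither $c$ nor an amplifier, so it cannot produce the factor $(A^k+BC)BC/p^{1+1/r}$. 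Also, a one-variable moment $V^rp+V^{2r}p^{1/2}$ balances at $V=p^{1/(2r)}$, which cannot explain the hypothesis $C>4p^{1/r}$.

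The missing idea is to remove $\bbeta$ by Cauchy--Schwarz over $c$ \emph{before} amplifying:
\[
|\cC_{f,\chi}(\balpha,\bbeta)|^2\le C\sum_{a_1,a_2\sim A}\sum_{b_1,b_2\sim B}\alpha(a_1,b_1)\overline{\alpha(a_2,b_2)}\sum_{c\sim C}\chi(f(a_1)+b_1c)\overline{\chi}(f(a_2)+b_2c).
\]
Now the $c$-sum is unweighted, so the shift $c\mapsto c+uv$ with $UV=C/4$ is legitimate. After pulling out $\chi(b_1)\overline{\chi}(b_2)$ and the factor $u$, you face $\chi(x_1+v)\overline{\chi}(x_2+v)$ with $x_i\equiv(f(a_i)\overline{b_i}+c)\overline{u}$ modulo $p$.

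H\"older against the representation function $\vartheta(x_1,x_2)$ then calls for the two-variable moment of Lemma~\ref{lm:DavenportErdos2} over $(x_1,x_2)\in\F_p^2$, of size $V^rp^2+V^{2r}p$. This balances at $V=p^{1/r}$, which is exactly why $C>4p^{1/r}$ is needed (so that $U=Cp^{-1/r}/4\ge 1$).

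The second moment of $\vartheta$ (Lemma~\ref{lm:theta-2moment}) comes from lifting the pair of congruences $(f(a_1)\overline{b_1}+c)v\equiv(f(a_3)\overline{b_3}+d)u$ and $(f(a_2)\overline{b_2}+c)v\equiv(f(a_4)\overline{b_4}+d)u$ to equations over $\Z$ with $|t_1|,|t_2|\ll(A^k+BC)BU/p$, and then using divisor bounds and Lemma~\ref{lm:Aver Jkq}. Since everything is computed for $|\cC_{f,\chi}(\balpha,\bbeta)|^2$, taking square roots produces the exponents $1/(4r)$ on $A/B+1$ and $1/(2r)$ on the last factor of $\Delta_2$.
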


In particular, for bounded weights and ranges of the same size,  we have the following:

\begin{corollary}\label{cor:TypeII}
Let $\|\balpha\|_{\infty},\|\bbeta\|_{\infty} \leqslant1$ and  $f\in \Z[X]$ a fixed polynomial of degree $k =2,3$. 
For all $A, B, C = p^{\eta+o(1)}$, with any fixed 
\begin{equation}\label{eq:Cond N2}
 \eta \in \left] \frac{1}{6}, \frac{1}{2(k-1)}\right[
\end{equation}
we have
\[
|\cC_{f,\chi} (\balpha,\bbeta) |
 \le ABCp^{-\delta}
\]
for some $\delta>0$, depending only on $\eta$, provided that $p$ is large enough.
\end{corollary}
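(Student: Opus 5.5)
We plan to deduce Corollary~\ref{cor:TypeII} from Theorem~\ref{thm:TypeII}: substitute $A=B=C=p^{\eta+o(1)}$ and choose the fixed integer $r$ large enough in terms of $\eta$. The hypothesis $C>4p^{1/r}$ holds once $r>1/\eta$. Condition~\eqref{eq:Cond ABC} also holds for $p$ large, since $A^k=p^{k\eta+o(1)}\le\gamma_f p$ follows from $k\eta<k/(2(k-1))\le 1$ when $k=2,3$, and $BC=p^{2\eta+o(1)}\le p/73$ follows from $\eta<1/2$.

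We now evaluate $\Delta_2$. The factor $(A/B+1)^{1/(4r)}$ equals $p^{o(1)}$. Because $k\eta\ge 2\eta$ for $k\ge2$, we have $A^k+BC=p^{k\eta+o(1)}$, so the last factor is
\[
\(\frac{(A^k+BC)BC}{p^{1+\frac1r}}+1\)^{\frac{1}{2r}} = p^{o(1)}\max\left\{1,\ p^{\frac{(k+2)\eta-1-1/r}{2r}}\right\}.
\]
The factor $(ABC)^{-1/(2r)}$ contributes $p^{-3\eta/(2r)}$. Hence
\[
\Delta_2 = p^{o(1)}\exp_p\!\(\frac{1}{2r}\Big(-3\eta+\frac{r+1}{2r}+\max\{0,(k+2)\eta-1-\tfrac1r\}\Big)\),
\]
where $\exp_p(x)=p^x$.

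It remains to show the bracket is negative for suitable large $r$. In the first case of the max, we need $-3\eta+\tfrac12+\tfrac1{2r}<0$. This holds for $r$ large because $\eta>1/6$ gives $3\eta>1/2$. In the second case, the bracket becomes $(k-1)\eta-\tfrac12-\tfrac1{2r}$, which is negative because $\eta<1/(2(k-1))$. The two inequalities in~\eqref{eq:Cond N2} are therefore exactly what the two cases need.

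So we fix $r=r(\eta)$ large enough that $r>1/\eta$ and $3\eta-\tfrac12-\tfrac1{2r}>0$. Let $-2r\delta_0<0$ be the larger of the two bracket values. Then $\Delta_2\le p^{-\delta_0+o(1)}$, and the statement follows with any $\delta<\delta_0$ for $p$ large. We expect no real obstacle. The only care needed is to track the $o(1)$ exponents, which requires $r$ fixed before $p\to\infty$, and to check that the degree restriction $k\le3$ is exactly what makes the interval~\eqref{eq:Cond N2} nonempty and keeps $A^k\le\gamma_f p$.
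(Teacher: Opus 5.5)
Your proposal is correct and follows the paper's proof essentially verbatim. The paper also substitutes $A,B,C=p^{\eta+o(1)}$ into $\Delta_2$ and obtains an exponent $-\zeta$, where $-2r\zeta$ is exactly your bracket. It then checks the two cases of the maximum, getting $\zeta=\frac{r+1}{4r^2}-\frac{(k-1)\eta}{2r}$ and $\zeta=\frac{6\eta-1}{4r}-\frac{1}{4r^2}$, both positive for $r$ large. Your explicit check of the hypotheses $C>4p^{1/r}$ and~\eqref{eq:Cond ABC}, which the paper leaves implicit in ``$r$ large enough'', is a welcome addition.
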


\begin{remark}
Theorems~\ref{thm:TypeI} and~\ref{thm:TypeII} deal with  polynomials   $f \in \Z[X]$. The arguments in this paper also allow us to extend these results to rational functions of small degrees, but with small savings. 
\end{remark}

\subsection{Counting elliptic matrices over $\F_p$}
We  identify $\F_p$ with the set of integers $\{0,1,  \ldots,  p-1\}$. 
Denote by $M_2(\F_p)$ the set of all $2\times2$ matrices with entries in $\F_p$.
Following Ganguly and Rajan~\cite[Section~1.4.2]{GR23}, a matrix $A\in M_2(\F_p)$ is called {\it elliptic\/} if its characteristic polynomial $f_A$ is irreducible. Let $\cE(p)$
the set of all elliptic matrices in $M_2(\F_p)$.

Clearly the irreducibility of the characteristic polynomial $f_A$ of a matrix 
\[
A  = \begin{pmatrix}  a & b \\ c   & d 
 \end{pmatrix} \in M_2(\F_p)
\]
is determined by the 
quadratic residuosity of its discriminant $\Disc(f_A)$ (for $p\ge 3$). More precisely, the above $A$ is elliptic if and only if 
\[
\Disc(f_A) = (a-d)^2 + 4bc
\]
is a quadratic non-residue modulo $p$. This easily implies that the set $\cE(\F_p)$ is of cardinality
\[
\sharp \cE(p)=\frac{1}{2}p^4 + O(p^3).
\]

We now examine the elliptic matrices in $\cE(p)$ such that the entries fall into short segments. To this end, define
\[
\cE(N,p) 
=\left\{ 
 \begin{pmatrix}  a  & b  \\ c   & d 
 \end{pmatrix}\in\cE(p):~ 1\leqslant a,b,c,d\leqslant N 
\right\}
\]
for $1\leqslant N\leqslant p$. It is reasonable to expect that
\begin{equation}\label{eq:cE(N,p)-expected}
\sharp\cE(N,p)\sim \frac{1}{2}N^4
\end{equation}
as long as $N$ is not too small compared to $p$.
As a step in their paper, Ganguly and Rajan~\cite{GR23} initiated the study of $\sharp \cE(N,p)$, and  proved~\eqref{eq:cE(N,p)-expected}
for all $N>p^{1/2+\varepsilon}$ with any $\varepsilon > 0$, by appealing to the P\'olya--Vinogradov bound~\cite[Theorems~12.5]{IK04} for incomplete character sums. They also noticed that
the Burgess bound~\cite[Theorem~12.6]{IK04} allows to reduce this threshold to $N \ge p^{1/4+\varepsilon}$ (see~\cite[Remark~1.14]{GR23}). 

Corollary~\ref{cor:TypeI} allows us to establish the asymptotic formula~\eqref{eq:cE(N,p)-expected} with a power saving as long as $N>p^{1/8+\varepsilon}$.

\begin{theorem}\label{thm:cE(N,p)-asymptoticformula}
For all $N  \ge  p^{\frac{1}{8}+\varepsilon}$ with any fixed $\varepsilon>0$, there exists some constant $\delta>0$, depending only on 
$\varepsilon$,  such that
\[
\sharp\cE(N,p)= \frac{1}{2}N^4+O(N^{4-\delta}).
\]
\end{theorem}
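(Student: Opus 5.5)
We will count elliptic matrices through the Legendre symbol $\left(\frac{\cdot}{p}\right)$, which we denote by $\chi$; it is a non-trivial character modulo $p$. For $1\le a,b,c,d\le N$ the matrix is elliptic exactly when $\chi((a-d)^2+4bc)=-1$. Hence
\[
\sharp\cE(N,p)=\frac12\sum_{a,b,c,d\le N}\bigl(1-\chi((a-d)^2+4bc)\bigr)-\frac12\,\sharp\{(a,b,c,d):~p\mid (a-d)^2+4bc\}.
\]
The first step is to bound the degenerate count. If $N\le p^{1/2-\varepsilon'}$ then $0<(a-d)^2+4bc<p$, so this count is zero. For larger $N$ it is at most $O(N^4/p+N^2)$: once $b$, $c$ and $a-d$ are fixed, the value of $a-d$ is determined modulo $p$ up to two choices. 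This is acceptable. If $N>p^{1/4+\varepsilon}$ we may simply invoke Burgess on the $c$-sum, as in~\cite[Remark~1.14]{GR23}, so we may assume $p^{1/8+\varepsilon}\le N\le p^{1/4+\varepsilon}$.

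It remains to show that $S=\sum_{a,b,c,d\le N}\chi((a-d)^2+4bc)\ll N^{4-\delta}$. Put $h=a-d$, so $|h|<N$, and weight each $h$ by $w(h)=\sharp\{(a,d): a-d=h\}\le N$. Write $4bc=b'c$ with $b'=4b$; since $b'$ runs over multiples of $4$, replace it by $\alpha$-weights on the range $b'\le 4N$. The term $h=0$ contributes trivially $O(N^3)$. For $h\ne0$, use the symmetry $h\mapsto -h$, which leaves $h^2$ unchanged, so that it suffices to treat $h\ge1$. Then cover $1\le h<N$, $1\le b'\le 4N$ and $1\le c\le N$ by $O((\log N)^3)$ dyadic boxes $h\sim A$, $b'\sim B$, $c\sim C$. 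On each box the sum has exactly the shape $\cC_{f,\chi}(\balpha,\mathbf 1)$ with $f(X)=X^2$ and
\[
\alpha(h,b')=\frac{w(h)}{N}\,\mathbf 1_{4\mid b'}, \qquad \|\balpha\|_\infty\le1 ,
\]
after factoring out a total of $N$. Condition~\eqref{eq:Cond ABC} holds because $A^2\le N^2\le p^{1/2+2\varepsilon}$ is much smaller than $\gamma_f p$ and $BC\ll N^2<p/73$. (We use $f(X)=X^2$ rather than a shifted polynomial; the requirement $A\ge1/\gamma_f$ only removes boxes with $A=O(1)$, which are trivially $O(N^3)$.)

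We now bound each box. If all three of $A,B,C$ are at least $N^{1-\delta_0}$, then they are $p^{\eta+o(1)}$ with $\eta>1/8$ once $\delta_0$ is small in terms of $\varepsilon$. Corollary~\ref{cor:TypeI} does not literally allow unequal sizes, so we use Theorem~\ref{thm:TypeI} directly: sizes that agree up to a factor $N^{\delta_0}$ change $\Delta_1$ only by $p^{O(\delta_0)}$. Since $k=2$, the range restriction $\eta<1/(2(k-2))=\infty$ is vacuous. With $r$ large, as in the corollary, we get a saving $p^{-\delta}$ that beats $p^{O(\delta_0)}$. Multiplying back gives $N\cdot ABC\,p^{-\delta}\ll N^4p^{-\delta}$. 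If instead some dimension is smaller than $N^{1-\delta_0}$, the trivial bound gives $N\cdot ABC\le N^{4-\delta_0}$. Summing over the logarithmically many boxes completes the proof, and the constant $\delta$ depends only on $\varepsilon$.

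The main point to check is the uniformity of Theorem~\ref{thm:TypeI} over the unequal dyadic sizes near $N$. We need to verify that $\Delta_1\le p^{-\delta}$ holds on the whole region $A,B,C\in[N^{1-\delta_0},4N]$. The factors $(A/B+1)$ and $\bigl((A^2+BC)BC/p^{1+1/(2r)}+1\bigr)$ are harmless there, since $N\le p^{1/4+\varepsilon}$ keeps the second one bounded. The decisive piece is $(AB)^{-1/(2r)}C^{-1/r}p^{(r+1)/(4r^2)}\approx N^{-2/r}p^{(r+1)/(4r^2)}$, which is a negative power of $p$ exactly when $N>p^{(r+1)/(8r)}$. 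This holds for $r$ large enough relative to $\varepsilon$. The constraint $C>4p^{1/(2r)}$ is also satisfied for such $r$.
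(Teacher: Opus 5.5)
Your argument is correct and follows the paper's route. The paper only says the theorem is an immediate consequence of Theorem~\ref{thm:TypeI} with $f(X)=X^2$, and you supply the omitted bookkeeping: the substitution $h=a-d$ with weights, writing $4bc=b'c$, the dyadic boxes with $\|\balpha\|_\infty\le1$, and Burgess for $N>p^{1/4+\varepsilon}$.

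Two small inaccuracies, neither affecting the conclusion:
\begin{enumerate}
\item Since you need $r>1/(8\varepsilon)$, the factor $\bigl((A^2+BC)BC/p^{1+1/(2r)}+1\bigr)^{1/(2r)}$ is \emph{not} bounded for $p^{1/4+1/(8r)}<N\le p^{1/4+\varepsilon}$. However, exactly as in the proof of Corollary~\ref{cor:TypeI} with $k=2$, its growth is compensated by the other factors: the net exponent of $p$ in $\Delta_1$ there is $-\frac{1}{4r}$, up to your $O(\delta_0/r)$ loss. Alternatively, simply lower your Burgess cut-off to $p^{1/4+1/(8r)}$.
\item The degenerate count is $O(N^4/p+N^3)$, not $O(N^4/p+N^2)$, which is equally harmless.
\end{enumerate}
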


Theorem~\ref{thm:cE(N,p)-asymptoticformula} is an immediate consequence of Theorem~\ref{thm:TypeI} with $f(X)=X^2$, and Theorem~\ref{thm:TypeII} also allows us to count elliptic matrices with weights on the entries.

\subsection{Further applications} Clearly, our bounds in Theorems~\ref{thm:TypeI} and~\ref{thm:TypeII}, combined with the 
Vaughan identity~\cite{Vau}, can be used to estimate character sums of the form 
\[
\sum_{a \sim A} \sum_{\substack{q \sim Q\\q~\text{prime}}} \vartheta(a) \chi(f(a) + q) \mand
\sum_{a \sim A} \sum_{n \sim N} \vartheta(a) \mu(n) \chi(f(a) + n),
\]
with arbitrary weight $\btheta = (\vartheta(a))_{a\sim A}$ of bounded $L_\infty$-norm, where $\mu(n)$ 
is the M{\"o}bius function.  In particular, if weight $\vartheta(a)$ is supported only on primes, one can get results about the distribution 
of quadratic residues of the form $q_1^2 + q_2$ for primes $q_1 \le Q_1$, $q_2 \le Q_2$ and rather small $Q_1$ and $Q_2$.

\section{Preliminaries}

\subsection{Notation} Throughout the whole article, as usual, the notations 
$U = O(V)$, $U \ll V$, and $ V\gg U$ are equivalent to $|U|\le c V$ for some 
positive constant $c$, which, throughout this work, is allowed to depend on 
the polynomial $f \in \Z[X]$ and some other parameters which we declare as fixed.

We use  $\tau(n)$ to denote the number of distinct  positive integer divisors of an integer $n\ne 0$, for which we 
recall the following 
well-known estimate  
\begin{equation}
\label{eq:tau}
\tau(k) = k^{o(1)}, \quad k \to \infty, 
\end{equation}
see, for example,~\cite[Equation~(1.81)]{IK04}).

We also denote $\e(z) = \exp(2 \pi i z)$. 

As we have already mentioned,  for $Z \ge 1$, we  write $z \sim Z$ to denote that $Z \le  z <  2Z$.

For an integer $k$ with $\gcd(k,p)=1$ we denote by $\overline{k}$ the multiplicative inverse of $k$ modulo $p$, 
that is, $k \overline{k} \equiv 1  \bmod p$. 

Let $\Xi_p$ be the set of all multiplicative characters modulo $p$ and
let $\Xi_p^* = \Xi_p \setminus \{\chi_0\}$ be the set of nonprincipal
characters.

   \subsection{Moments of short character sums}

We also make use 
of  the following  version of the  {\it Davenport--Erd\H{o}s Lemma} to refer  
to the seminal result in~\cite[Lemma~3 and Footnote p.~262]{DE52}, 
which we present it in the form given by~\cite[Lemma~2.3]{FS23}.  
The proof depends heavily on Weil's bound for complete character sums, as a consequence of his resolution of the Riemann Hypothesis for curves over finite fields.
 
\begin{lemma}\label{lm:DavenportErdos}
Let $\chi \in \Xi_p^*$. 
For any set $\cD \subseteq \F_p$ with  $\sharp\cD = D$, for any complex 
weights $\balpha$ satisfying $\vert \alpha_d \vert \leq 1$ for $d \in \cD$ and for every integer  $r \geq 1$,  we have the inequality
\[
  \sum_{x\in\F_p}  
   \left|\sum_{d \in \cD}\alpha_d \chi(x+d)\right|^{2r} \le 
    \(2r D\)^r p + 2r D^{2r} p^{1/2}.
  \]
\end{lemma}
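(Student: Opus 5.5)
This is the Davenport--Erd\H{o}s moment bound, and I would prove it in the classical way: expand the $2r$-th power, swap the order of summation, and control each resulting complete character sum by Weil's bound.

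\emph{Expansion.} Write $|S(x)|^{2r}=S(x)^r\overline{S(x)}^r$ with $S(x)=\sum_{d\in\cD}\alpha_d\chi(x+d)$. Summing over $x$ gives
\[
\sum_{x\in\F_p}|S(x)|^{2r}=\sum_{d_1,\ldots,d_{2r}\in\cD}\alpha_{d_1}\cdots\alpha_{d_r}\overline{\alpha_{d_{r+1}}\cdots\alpha_{d_{2r}}}\sum_{x\in\F_p}\chi\bigl(g(x)\bigr),
\]
where $g(x)=\prod_{i\le r}(x+d_i)\prod_{i>r}(x+d_i)^{m-1}$ and $m$ is the order of $\chi$. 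Since $|\alpha_d|\le1$, it suffices to bound $\sum_{\mathbf d}\bigl|\sum_x\chi(g(x))\bigr|$.

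\emph{Diagonal tuples.} Call a tuple $(d_1,\ldots,d_{2r})$ diagonal if every value occurring among the $d_i$ occurs with total multiplicity (as an exponent of $g$) divisible by $m$. For $m\ge3$ it is simplest to use a condition that is weaker but suffices: every distinct value occurs at least twice among $d_1,\ldots,d_{2r}$. Such a tuple takes at most $r$ distinct values. To count these tuples, choose a set partition of the $2r$ indices into blocks of size at least $2$, then assign values from $\cD$ to the blocks. The number of such tuples is at most
\[
\sum_{j\le r} S(2r,j)D^j\le (2r)^r D^r,
\]
using that the number of maps from $2r$ indices onto at most $r$ labels chosen from $D$ values is bounded by $\binom{2r}{r}$-type quantities. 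Cleanly, I would bound it as $(2rD)^r$ via the crude count: pick for each index either a new value or one of the at most $r$ earlier values, giving at most $D^r r^{2r}\binom{2r}{r}$; I expect some care is needed to land exactly on $(2rD)^r$. For each such tuple, bound the inner sum trivially by $p$. This produces the term $(2rD)^r p$.

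\emph{Non-diagonal tuples.} If some value $d$ appears exactly once, then $g$ has a root $-d$ of multiplicity $1$ or $m-1$. Neither multiplicity is divisible by $m$, because $m\ge2$ and $1\le m-1<m$. Hence $g$ is not a constant times an $m$-th power in $\overline{\F}_p[x]$. Weil's bound then gives
\[
\Bigl|\sum_x\chi\bigl(g(x)\bigr)\Bigr|\le(\#\text{distinct roots}-1)p^{1/2}\le 2r\,p^{1/2}.
\]
There are at most $D^{2r}$ such tuples, which gives the term $2rD^{2r}p^{1/2}$.

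\emph{Main obstacle.} The only delicate step is the combinatorial count of tuples in which every value is repeated, needed to get precisely the constant $(2rD)^r$. I would handle it by mapping each such tuple to the ordered list of first occurrences and choosing the positions of the repetitions. Alternatively, I would simply quote~\cite[Lemma~2.3]{FS23}, since the statement is taken from there verbatim.
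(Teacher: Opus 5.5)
Your overall plan is the standard Davenport--Erd\H{o}s argument. That argument lies behind the paper's treatment: the paper gives no proof and simply quotes \cite[Lemma~2.3]{FS23}. Your off-diagonal step is fine: if some value occurs exactly once, the root $-d$ of $g$ has multiplicity $1$ or $m-1$, Weil gives at most $(2r-1)p^{1/2}$, and there are at most $D^{2r}$ such tuples.

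The gap is the one you flagged yourself. Nothing in the proposal shows that the number $N$ of tuples in $\cD^{2r}$ in which every value occurs at least twice is at most $(2rD)^r$, and the routes you sketch do not give it.
\begin{itemize}
\item If $S(2r,j)$ denotes the usual Stirling numbers, then $\sum_{j\le r}S(2r,j)D^j\le(2rD)^r$ is false for large $r$ and large $D$. A saddle-point computation gives $S(2r,r)^{1/r}/r\to c\approx 2.27>2$.
\item If you mean the singleton-free numbers $S_{\ge2}(2r,j)$, the inequality still fails for small $D$, because $(D)_j$ has been replaced by $D^j$. At $D=1$ the left side is the number of singleton-free partitions of a $2r$-set, which already exceeds $(2r)^r$ for $r=12$.
\item Your first-occurrence encoding gives $\binom{2r}{r}r^rD^r$ at $j=r$. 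This is at most $(4rD)^r$ but exceeds $(2rD)^r$. The crude count $D^r r^{2r}\binom{2r}{r}$ is worse still.
\end{itemize}
So, as written, you prove the lemma only with a larger constant such as $(4rD)^r$. That would be harmless for the paper's applications, where $r$ is fixed, but it is not the stated inequality.

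To close the gap, use the exact count $N=\sum_{j\le r}S_{\ge2}(2r,j)\,(D)_j$, where $S_{\ge2}(n,j)$ is the number of partitions of $\{1,\ldots,n\}$ into $j$ blocks of size at least $2$. Then split according to the size of $D$.
\begin{itemize}
\item If $D\le 2r$, then trivially $N\le D^{2r}\le(2rD)^r$.
\item If $D>2r$, then $(D)_j\le D^j\le D^r(2r)^{j-r}$ for $j\le r$. By the exponential formula and positivity of coefficients, for every $t>0$,
\[
\sum_{j}S_{\ge2}(2r,j)(2r)^j=(2r)!\,[z^{2r}]\,e^{2r(e^z-1-z)}\le (2r)!\,e^{2r(e^t-1-t)}\,t^{-2r}.
\]
Take $t=4/5$ and use $(2r)!\le\sqrt{4\pi r}\,(2r/e)^{2r}e^{1/(24r)}$. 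The right side is then at most $\sqrt{4\pi r}\,e^{1/(24r)}e^{-0.70r}(2r)^{2r}$, which is $\le(2r)^{2r}$ for $r\ge3$. Hence $N\le D^r(2r)^{-r}(2r)^{2r}=(2rD)^r$.
\item For $r=1,2$, check directly that $N=D$ and $N=3D^2-2D$.
\end{itemize}
With this count inserted, your argument gives exactly the bound in the statement.
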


We remark that the condition  $\cD \subseteq \F_p^*$ in~\cite[Lemma~2.3]{FS23}
can be replaced with  $\cD \subseteq \F_p$  without any changes in the argument 
or result.

We also need the following version of Lemma~\ref{lm:DavenportErdos}, which is also a consequence of Weil's bound for complete multiplicative 
character sums over finite fields. This was originally formulated in~\cite[Lemma~3.2]{FSX25}.

\begin{lemma}\label{lm:DavenportErdos2}
Let $\chi_1,\chi_2 \in \Xi_p^*$. 
For any set $\cD \subseteq \F_p$ with  $\sharp\cD = D$, for any complex 
weights $\balpha$ satisfying $\vert \alpha_d \vert \leq 1$ for $d \in \cD$ and for every integer  $r \geq 1$,  we have the inequality
$$
\mathop{\sum\sum}_{x,y\in\F_p}\left| \sum_{d\in\cD}\alpha_d\chi_1(x+d)\chi_2(y+d)\right| ^{2r}\leqslant (2r)^r(D^rp^2+2rD^{2r}p).
$$
\end{lemma}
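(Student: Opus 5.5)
We prove Lemma~\ref{lm:DavenportErdos2} by expanding the $2r$-th power and reducing to complete character sums in two variables, each of which factors as a product of two one-variable sums. Write $\cD = \{d\}$ and expand:
\[
\mathop{\sum\sum}_{x,y\in\F_p}\Bigl| \sum_{d\in\cD}\alpha_d\chi_1(x+d)\chi_2(y+d)\Bigr|^{2r}
=\sum_{d_1,\ldots,d_{2r}\in\cD}\prod_{i=1}^{r}\alpha_{d_i}\overline{\alpha_{d_{r+i}}}\;S_1(\mathbf d)\,S_2(\mathbf d),
\]
where
\[
S_j(\mathbf d)=\sum_{z\in\F_p}\chi_j\Bigl(\prod_{i=1}^r(z+d_i)\Bigr)\overline{\chi_j}\Bigl(\prod_{i=1}^r(z+d_{r+i})\Bigr),\qquad j=1,2.
\]
The key point is that the double sum over $(x,y)$ splits into a product of a sum over $x$ and a sum over $y$. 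Since $|\alpha_d|\le 1$, it suffices to bound $\sum_{\mathbf d}|S_1(\mathbf d)S_2(\mathbf d)|$.

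Next we split the tuples $\mathbf d\in\cD^{2r}$ into two classes.

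\textbf{Good tuples.} These are the tuples for which every value appears an even number of times, counting occurrences among $d_1,\dots,d_r$ positively and among $d_{r+1},\dots,d_{2r}$ negatively, in the sense that the multiset $\{d_1,\dots,d_r\}$ coincides with $\{d_{r+1},\dots,d_{2r}\}$. This is the standard Davenport--Erd\H{o}s counting. The number of such tuples is at most $r!\,D^r\le r^rD^r$. For these we use the trivial bound $|S_j|\le p$, which gives a total contribution of at most $r^rD^rp^2\le (2r)^rD^rp^2$.

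\textbf{Remaining tuples.} For every other tuple there is some $d$ whose net multiplicity $m$ in the rational function
\[
F(z)=\prod_{i\le r}(z+d_i)\Big/\prod_{i\le r}(z+d_{r+i})
\]
is nonzero with $|m|\le r<p$. Let $\chi_j$ have order $\mathrm{ord}(\chi_j)$. Then $F$ is a $\mathrm{ord}(\chi_j)$-th power times a constant only if every net multiplicity is divisible by $\mathrm{ord}(\chi_j)$. Order $2$ can divide all the multiplicities even when the multisets differ; this is the subtlety in the one-variable lemma as well. Here we must be careful, and the cleanest route is to enlarge the set of good tuples to those in which each value occurs an even number of times in total across all $2r$ positions, the usual Davenport--Erd\H{o}s condition. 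That set has size at most $(2r)^r D^r$ (choose a pairing, then values), matching the stated constant. For tuples outside this set, some root has odd total multiplicity, hence net multiplicity that is odd and in particular nonzero and not divisible by any order dividing it. Honestly, odd multiplicity handles order $2$ but not higher orders. Instead we argue as follows: a root of net multiplicity $m$ with $0<|m|\le r<p$ forces $F$ not to be a $d$-th power for $d\mid\mathrm{ord}\chi$ whenever $\mathrm{ord}\chi\nmid m$. The clean statement used in~\cite{FS23} is that the Weil bound $|S_j|\le 2r\sqrt p$ holds unless $F$ is, up to a constant, a $\mathrm{ord}(\chi_j)$-th power. Moreover, the tuples for which $F$ is such a power are contained in the even-multiplicity set, since a power of order at least $2$ forces every net multiplicity, and hence every total multiplicity, to be even. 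So for tuples outside the even set, both $|S_1|\le 2r\sqrt p$ and $|S_2|\le 2r\sqrt p$. Alternatively, and sufficiently, we may bound one factor trivially, $|S_2|\le p$, and use $|S_1|\le 2r\sqrt p$ for the other. This gives a contribution of at most $D^{2r}\cdot 2r\sqrt p\cdot p$, which is weaker than the stated $2rD^{2r}p$.

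To reach the stated second term we need both Weil bounds, giving
\[
D^{2r}\,(2r\sqrt p)^2=4r^2D^{2r}p\le (2r)^r\cdot 2r\,D^{2r}p \quad\text{for } r\ge 1 .
\]
Adding this to the good-tuple contribution $(2r)^rD^rp^2$ yields the claimed bound
\[
(2r)^r\bigl(D^rp^2+2rD^{2r}p\bigr).
\]

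The main obstacle is the degeneracy analysis: showing that outside the even-multiplicity tuples, \emph{neither} rational function is a constant times a $\mathrm{ord}(\chi_j)$-th power. This holds because both $S_1$ and $S_2$ involve the same rational function $F$, and because $\chi_1,\chi_2$ are non-principal, so each has order at least $2$.
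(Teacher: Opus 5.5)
\textbf{Verdict: there is a genuine gap in the degeneracy analysis.} Your overall structure is the natural one. The paper does not reprove this lemma; it quotes it from \cite[Lemma~3.2]{FSX25}. The expected proof is exactly your expansion into $\sum_{\mathbf d}S_1(\mathbf d)S_2(\mathbf d)$, with Weil's bound applied to \emph{both} factors off an exceptional set and the trivial bound $p^2$ on that set. What fails is your choice of exceptional set.

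You assert that if $F$ is a constant times an $m$-th power, $m=\mathrm{ord}(\chi_j)\ge 2$, then every net multiplicity, and hence every total multiplicity, is even. Being an $m$-th power only forces $m\mid a_d-b_d$ for every $d$, where $a_d,b_d$ count occurrences of $d$ among $d_1,\dots,d_r$ and among $d_{r+1},\dots,d_{2r}$. For odd $m$ this gives no parity information. Take $p\equiv 1\bmod{3}$, $\chi_1$ cubic, $r=3$ and $\mathbf d=(a,a,a,b,b,b)$ with $a\ne b$. Both values occur an odd number of times, so $\mathbf d$ lies outside your even-multiplicity set. Yet $F(z)=(z+a)^3/(z+b)^3$ is a cube and $S_1(\mathbf d)=p-2$. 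So your bound $|S_j(\mathbf d)|\le 2r\sqrt p$ off the even set is false, and the tuples you bound trivially do not include all degenerate ones.

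The fix is the usual Davenport--Erd\H{o}s dichotomy: call $\mathbf d$ exceptional when no value occurs \emph{exactly once} among $d_1,\dots,d_{2r}$.
\begin{itemize}
\item If some $d$ occurs exactly once, then $a_d-b_d=\pm1$, which no integer $\ge 2$ divides. Hence $F$ is not a constant times an $\mathrm{ord}(\chi_j)$-th power for $j=1$ \emph{and} for $j=2$. As you note, it matters that both sums involve the same $F$; the orders of $\chi_1,\chi_2$ then play no role. Weil gives $|S_1S_2|\le(2r-1)^2p\le 4r^2p$.
\item On exceptional tuples use $|S_1S_2|\le p^2$.
\end{itemize}

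It then remains to show that the exceptional set has at most $(2r)^rD^r$ elements, and your pairing argument does not give this. Exceptional tuples can contain values of odd multiplicity $\ge 3$, so they are not encoded by a perfect matching of the $2r$ positions. This is the same count that produces the term $(2rD)^rp$ in Lemma~\ref{lm:DavenportErdos}. Once it is supplied, your final step $4r^2\le (2r)^r\cdot 2r$ completes the proof.
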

 
  \subsection{Bounding the number of solutions to polynomial congruences}

An integer number $q$ is called
\begin{itemize}
\item  {\it $r$-th power-free\/} if   any prime number $p \mid q$ satisfies $p^r \nmid q$; 

\item  {\it  $r$-th power-full\/} if any prime number  $p \mid q$ satisfies $p^r \mid q$. 
\end{itemize}
We note that $1$ is both $r$-th power-free and $r$-th power-full for any $r \in \N$, and 
for $r=2$ such numbers are called {\it square-free\/} and {\it square-full\/}, respectively.

For a polynomial $F\in \Z[X]$  we denote by 
$I_F(q)$ the number of solutions to the congruence 
$F(x) \equiv 0 \bmod q$ with $x\bmod q$.  

Furthermore, we define
\begin{align*}
J_k(q) = \max\{I_F(q):~ F(X) = b_kX^k&+\cdots+b_1X+b_0, \\
&  \gcd(q, b_1, \cdots, b_k) = 1\}. 
\end{align*}
We note that $J_k(q)$ does not change if we replace the above coprimality condition with the condition $\gcd(q, b_0, b_1,\cdots, b_k) = 1$. Indeed it follows from the fact that if a polynomial $F$ satisfies $\gcd(q, b_0, b_1,\cdots, b_k) = 1$
but does not satisfies $\gcd(q, b_1, \cdots, b_k) = 1$, there is a prime $p$ dividing each of $q,b_1,b_2,\cdots, b_k$, but not $b_0$. This means, as a polynomial modulo $p$, $F(X)$ is congruent to $b_0$. Thus $F$ has no zero modulo $p$, so no root 
modulo $q$. So such an $F$ has no influence on the value of $J_k (q)$.

\begin{lemma}
\label{lm:j-power-factors-cong}
For a square-free integer $q\ge 1$  and $k=1$ we set $q=r_1 $. 
Otherwise we write  $q\ge 1$ as 
$q=r_1 \ldots r_k$ with some $k\ge 2$ positive   integers  $r_1, \ldots, r_k$ with  $\gcd(r_i,r_j) = 1$, $1 \le i < j \le k$, 
and such that 
\begin{itemize} 
\item $r_1$ is  square-free,
\item $r_i$ is $i$-th power-full but $(i+1)$-th power free when $2 \le i \le k-1$,
\item $r_k$ is   $k$-th power-full. 
\end{itemize}
Then for any fixed  integer  $k \ge 1$ we have 
\[
J_k(q) \le q^{o(1)} \prod_{i=1}^k r_i^{1-1/i} , \qquad \text{as} \ q \to \infty. 
\] 
\end{lemma}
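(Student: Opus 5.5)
Since $J_k(q)$ is built from the counts $I_F(q)$, and $I_F$ is multiplicative in $q$ by the Chinese Remainder Theorem, the plan is to bound $J_k(p^e)$ for each prime power and then multiply. The local target is
\[
I_F(p^e)\le C_k\, p^{e(1-1/k)}\quad\text{for all } e\ge1,\qquad I_F(p^e)\le k\quad\text{for } e=1,
\]
valid for any $F$ of degree at most $k$ whose non-constant coefficients are not all divisible by $p$. Once this holds, multiplicativity gives $J_k(q)\le \prod_{p^e\| q}\min\{C_k p^{e(1-1/k)}, \ldots\}$, and the constants contribute at most $C_k^{\omega(q)}=q^{o(1)}$.

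First I reconcile the local bound with the stated shape. Take $p^e\| q$ with $p^e$ a factor of $r_i$. For $i<k$ we have $i\le e\le i$ unless $i=1$, in which case $e=1$; in general $p^e\|r_i$ means $e=i$ for $2\le i\le k-1$, $e=1$ for $i=1$, and $e\ge k$ for $i=k$. So it suffices to show $I_F(p^e)\ll_k p^{e(1-1/e)}$ when $e\le k$, and $I_F(p^e)\ll_k p^{e(1-1/k)}$ when $e\ge k$. Both follow from the single statement
\[
I_F(p^e)\ll_k p^{e-\lceil e/k\rceil}\quad\text{or better.}
\]
For $e\le k$ this gives $p^{e-1}=p^{e(1-1/e)}$, and for $e\ge k$ it gives $p^{e-e/k}$. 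The case $e=1$ is also covered: the polynomial is nonzero mod $p$ of degree at most $k$, so it has at most $k$ roots. So I will aim for the uniform bound $I_F(p^e)\le k\,p^{e-\lceil e/k\rceil}$.

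To prove this, I view the root set mod $p^e$ as lifted from roots mod $p$ and group roots by residue class mod $p^{\lceil e/k\rceil}$. The cleanest route is the classical Stečkin-type bound $I_F(p^e)\le k\,p^{e(1-1/k)}$ for primitive $F$ (Stečkin / Konyagin), for which I would reprove a slightly weaker version.

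1. Over $\overline{\mathbb Q}_p$, write $F=b\prod_{j}(X-\theta_j)$ after dealing with the leading coefficient. Primitivity of the non-constant part controls $v_p(b)$ relative to the root valuations via Newton polygons.
2. If $v_p(F(x))\ge e$, then some $j$ has $v_p(x-\theta_j)\ge (e-v_p(b))/k$.
3. Hence every root lies in one of at most $k$ $p$-adic balls of radius $p^{-\lceil (e-v_p(b))/k\rceil}$, each containing at most $p^{e-\lceil\cdot\rceil}$ residues mod $p^e$.

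The main obstacle is the leading coefficient: when $p\mid b_k$, some roots $\theta_j$ are non-integral, and $v_p(b)$ can spoil the count in step 3. I would handle this by induction on degree. If $p\mid b_k$, then the roots in $\mathbb Z_p$ correspond to the factor of $F$ of lower degree given by the Newton polygon segment of slopes $\ge0$. That factor is still primitive and has a unit leading coefficient up to the content absorbed. Balls around non-integral $\theta_j$ contain no $p$-adic integers at all. With this adjustment the bound becomes $k\,p^{e-\lceil e/k\rceil}$ with $k$ balls.

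Finally I multiply over $p\mid q$. The factor $k^{\omega(q)}=q^{o(1)}$ comes from the divisor-type estimate~\eqref{eq:tau}, and this yields the claimed inequality.
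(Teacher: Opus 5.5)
Your proof is correct. The global structure matches the paper's: CRT reduces to prime powers, exponents are grouped as in the decomposition $q = r_1\cdots r_k$, and the per-prime constants are absorbed into $q^{o(1)}$. The difference is in the local estimate.

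The paper splits by exponent. For $\gamma \le k$ it uses the trivial lifting bound $J_k(p^\gamma) \le p^{\gamma-1}J_k(p) \le k\,p^{\gamma-1}$. For $\gamma > k$ it simply cites Konyagin's theorem $J_k(p^\gamma) \ll p^{\gamma(1-1/k)}$. You instead prove one uniform, self-contained estimate $I_F(p^e) \le k\,p^{e-\lceil e/k\rceil}$ by factoring over $\overline{\mathbb{Q}}_p$ and applying pigeonhole to the distances $v(x-\theta_j)$. This covers both regimes at once, gives the explicit constant $k$ per prime, and is slightly sharper than Konyagin's form when $k \nmid e$; it is attained up to the factor $k$ by $F = X^k$. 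What the paper's route buys is brevity. What yours buys is independence from the external citation, at the cost of a little $p$-adic algebra.

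When you write it out, make two points explicit.

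First, the case $p \mid b_k$ needs no induction on degree. Factor $F = GH$, where $G$ is monic and carries exactly the roots of nonnegative valuation. $G$ lies in $\mathbb{Q}_p[X]$ because the valuation is Galois-invariant, so the set of such roots (with multiplicities) is Galois-stable; its roots are integral, so $G \in \mathbb{Z}_p[X]$.
\begin{itemize}
\item By Gauss's lemma, $H$ is primitive.
\item Write $H(X) = H(0)\prod_i (1 - X/\eta_i)$ with $v(1/\eta_i) > 0$. Then every coefficient of $H$ in positive degree has valuation strictly larger than $v(H(0))$, so primitivity forces $v(H(0)) = 0$.
\item Hence $v(H(x)) = 0$ for all $x \in \mathbb{Z}_p$, and $I_F(p^e) = I_G(p^e)$ with $\deg G \le k$.
\end{itemize}
This is the precise sense in which the leading coefficient and the non-integral roots cost nothing. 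Your remark that balls around non-integral $\theta_j$ contain no $p$-adic integers only locates the roots; it does not by itself cancel the $v_p(b)$ loss.

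Second, the $\theta_j$ may lie in a ramified extension, so justify the ceiling. If $x_0, x_1 \in \mathbb{Z}_p$ both satisfy $v(x-\theta_j) \ge \lambda$, the ultrametric inequality gives $v(x_0 - x_1) \ge \lambda$. Since $v(x_0-x_1)$ is an integer, $x_0 \equiv x_1 \bmod p^{\lceil \lambda\rceil}$.

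Finally, the inequality $k^{\omega(q)} \le \tau(q)^{\log_2 k}$ makes your appeal to \eqref{eq:tau} precise.
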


\begin{proof} Let 
\begin{equation}\label{eq:PrimeFact}
q= p_1^{\gamma_1}\cdots p_s^{\gamma_s}
 \end{equation} 
be the prime number factorisation of $q$. 
The Chinese Remainder Theorem implies
\[
I_F (q) = I_F (p_1^{\gamma_1})\cdots I_F (p_s^{\gamma_s}),
\]
which gives immediately
\begin{equation} \label{eq:CRT}
J_k(q) \leqslant J_k\(p_1^{\gamma_1}\)\cdots J_k\(p_s^{\gamma_s}\)
 \end{equation}
 (in fact,~\eqref{eq:CRT} is an equality, but this is not needed here).

Next, for a prime $p$ and a positive integer $\gamma\le k$ we use the trivial bound 
\begin{equation} 
\label{eq:small gamma}
 J_k\(p^{\gamma}\) \le p^{\gamma-1}  J_k\(p\) \le k p^{\gamma(1-1/\gamma)} , 
\end{equation}
while for $\gamma>k$ we use a result Konyagin~\cite[Theorem~1]{Kon80}, which implies 
\begin{equation}
\label{eq:large gamma}
 J_k\(p^{\gamma}\)  \ll p^{\gamma(1-1/k)} .
\end{equation}

Substituting the bounds~\eqref{eq:small gamma} and~\eqref{eq:large gamma} 
in~\eqref{eq:CRT}, after collecting together contributions from primes which 
appear with the same exponent in~\eqref{eq:PrimeFact}, we derive
\[
J_k(q) \le C_k^s \prod_{i=1}^k r_i^{1-1/i} ,  
\] 
for some $C_k \ge 1$ which depends only on $k$. Since obviously $s! \le q$, we see that 
$C_k^s  =  q^{o(1)}$, and the result follows. 
\end{proof} 

Next, we estimate the average value of $J_k(q) $ over moduli $q \le Q$. 

\begin{lemma} 
\label{lm:Aver Jkq}
For any real $Q \ge 1$ and a fixed integer $k \ge 1$,  we have 
\[
\sum_{q \le Q} J_k(q) \le Q^{1+o(1)}, \qquad \text{as} \ Q \to \infty. 
\]
\end{lemma}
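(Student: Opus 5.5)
Using Lemma~\ref{lm:j-power-factors-cong}, I would write each $q \le Q$ uniquely as $q = r_1 \cdots r_k$ with pairwise coprime $r_i$ of the prescribed types, and bound
\[
\sum_{q\le Q} J_k(q) \le Q^{o(1)} \sum_{r_1\cdots r_k \le Q} \prod_{i=1}^k r_i^{1-1/i},
\]
where the outer sum runs over tuples with $r_1$ square-free and, for $i\ge 2$, $r_i$ $i$-th power-full. I drop the coprimality and power-free conditions, since this only enlarges the sum. The whole argument then rests on one counting fact. For $i \ge 1$, the number of $i$-th power-full integers $r \le R$ is $O(R^{1/i})$. Indeed, every $i$-th power-full $r$ can be written as $r = \prod_{j=i}^{2i-1} m_j^{j}$, so the count is at most
\[
\sum_{m_{i+1},\ldots,m_{2i-1}} \Bigl(R/\textstyle\prod_{j>i} m_j^j\Bigr)^{1/i} \ll R^{1/i},
\]
because $\sum_m m^{-j/i}$ converges for $j>i$. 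Partial summation then gives
\[
\sum_{\substack{r\le R\\ r\ i\text{-th power-full}}} r^{1-1/i} \ll R^{1-1/i}\cdot R^{1/i} = R,
\]
up to a $\log R$ factor in the boundary case, which is harmless.

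Next I decompose dyadically. I restrict $r_i \sim R_i$ for each $i$, with $\prod_i R_i \le Q$ up to a factor $2^k$. The number of dyadic tuples is $O((\log Q)^k) = Q^{o(1)}$. For a fixed tuple, the sum factorises, and each factor satisfies
\[
\sum_{r_i\sim R_i} r_i^{1-1/i} \ll R_i^{1-1/i}\cdot \#\{r_i\sim R_i\} \ll R_i .
\]
For $i=1$ this is the trivial bound $R_1$; for $i\ge2$ it uses the power-full count $R_i^{1/i}$. The product over $i$ is therefore $\ll \prod_i R_i \ll Q$. Summing over the $Q^{o(1)}$ dyadic tuples gives $\sum_{q\le Q} J_k(q)\le Q^{1+o(1)}$.

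The only slightly delicate point is the $Q^{o(1)}$ factor in Lemma~\ref{lm:j-power-factors-cong}, which depends on $q$. This is fine because $q \le Q$, so $q^{o(1)} \le Q^{o(1)}$ uniformly (for small $q$ it is just bounded). Note also that $k$ is fixed, so the power-full counting constants depend only on $k$. I do not expect a genuine obstacle; the power-full counting bound is the key input.
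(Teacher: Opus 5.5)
Your proposal is correct and follows essentially the same route as the paper. Both arguments decompose $q=r_1\cdots r_k$ via Lemma~\ref{lm:j-power-factors-cong}, split into $O((\log 2Q)^k)$ dyadic boxes $r_i\sim R_i$ with $R_1\cdots R_k\ll Q$, and use the fact that there are $O(R_i^{1/i})$ $i$-th power-full integers up to $R_i$, so each box contributes $Q^{o(1)}\prod_i R_i$. The only difference is that you prove this Erd\H{o}s--Szekeres count directly from the representation $r=\prod_{j=i}^{2i-1}m_j^{j}$, whereas the paper simply cites it.
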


\begin{proof}
We now fix some real    numbers $R_1, \ldots, R_k\ge 1$ and consider the set 
$\cQ\(R_1,  \ldots, R_k\)$ of $q \le Q$ for which the decomposition $(r_1, \ldots,r_k)$ 
in  Lemma~\ref{lm:j-power-factors-cong} satisfies  $r_i \sim R_i$, $1 \le i \le k$, and thus 
\begin{equation} \label{eq:q in QRRR}
\begin{split} 
  \sum_{q\in \cQ\(R_1,  \ldots, R_k\)} J_k(q) 
& \le Q^{o(1)}   \sum_{q\in \cQ\(R_1,  \ldots, R_k\)}   \prod_{i=1}^k r_i^{1-1/i} \\
& \le   Q^{o(1)} \sharp  \cQ\(R_1,  \ldots, R_k\)  \prod_{i=1}^k R_i^{1-1/i} .
\end{split} 
\end{equation}

By the  classical result of  Erd{\H o}s and  Szekeres~\cite{ES34}, the number of $i$-th power-full integers $r_i \le R_i$ 
is  $O(R_i^{1/i})$. Hence, we have
\begin{equation} \label{eq:card QRRR}
\sharp  \cQ\(R_1,  \ldots, R_k\)  \ll  \prod_{i=1}^k R_i^{1/i}. 
\end{equation}
Combining~\eqref{eq:q in QRRR} and~\eqref{eq:card QRRR}, we obtain
\[ \sum_{q\in \cQ\(R_1,  \ldots, R_k\)} J_k(q) \le  Q^{o(1)}  \prod_{i=1}^k R_i .
\]

Via dyadic partition, we see that   the whole summation domain $q \le Q$ can be covered by $O((\log 2Q)^k)$
sets $\cQ(R_1,  \ldots, R_k)$  for some $R_1, \ldots, R_k$, satisfying   $R_1 \ldots R_k \ll Q$, 
the result now follows. 
\end{proof}

 \subsection{Counting solutions to congruence equations} 
For  a polynomial $f\in \Z[X]$, real  $A,B,C, U\in[1,p/2]$ and an integer $x$, denote by $\varrho(x)$ 
the number of solutions to the equation
\[
f(a)\overline{b}+c\equiv ux\bmod p
\]
in $(a,b,c,u)\in\Z^4$ with $a\sim A,\ b\sim B, \ c \sim C, \ u \sim U$.
Moreover,
for integers $x_1,x_2$, denote by
 $\vartheta(x_1,x_2)$ 
 the number of solutions to the system 
\[
\begin{cases}
f(a_1)\overline{b_1} + c\equiv ux_1\bmod p,\\ 
 f(a_2)\overline{b_2} + c\equiv ux_2\bmod p,
\end{cases}
\]
in $(a_1,a_2,b_1,b_2,c,u)\in\Z^6$ with $a_1,a_2 \sim A$,  $b_1,b_2 \sim B$,  $c\sim C$ and $u\sim U$.

First we recall the trivial inequalities
\begin{equation}\label{eq:rho-theta-1moment}
\sum_{x=1}^p \varrho(x) \ll ABCU \mand \sum_{x_1,x_2=1}^p \vartheta(x_1,x_2)\ll A^2B^2CU.
\end{equation}
In what follows, we estimate the second moment of $\varrho(x)$ and $\vartheta(x_1,x_2)$,
which is used later to prove Theorems~\ref{thm:TypeI} and~\ref{thm:TypeII}. 

\begin{lemma}\label{lm:rho-2moment}
Let  $f\in \Z[X]$ be a fixed   polynomial of degree $k\ge 2$.
Assume that $A,B,C\geqslant2$ satisfy~\eqref{eq:Cond ABC}  with a sufficiently small 
$\gamma_f > 0$
and also $U \le p/73$. Then
we have
\[
\sum_{x=1}^{p-1} \varrho(x) ^2 \le  ACU(A+B) \( \frac{(A^k+BC)BU}{p} + 1\) p^{o(1)} .
\]
\end{lemma}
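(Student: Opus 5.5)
The plan is to lift the congruence to an equation over $\Z$ and count its solutions with divisor-type arguments, using Lemma~\ref{lm:Aver Jkq} for the polynomial variable.

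\textbf{Step 1: opening the square.} Expanding $\sum_x \varrho(x)^2$, it counts $8$-tuples $(a_1,a_2,b_1,b_2,c_1,c_2,u_1,u_2)$ with $a_i\sim A$, $b_i\sim B$, $c_i\sim C$, $u_i\sim U$, and $1\le x\le p-1$, such that $f(a_i)\overline{b_i}+c_i\equiv u_ix \bmod p$ for $i=1,2$. Since $p\nmid x$ and $p\nmid u_i$ (as $U\le p/73$), $x$ is determined by the first congruence. The tuples counted are therefore exactly those with
\[
(f(a_1)+b_1c_1)\,b_2u_2\equiv (f(a_2)+b_2c_2)\,b_1u_1 \bmod p ,
\]
with both sides nonzero modulo $p$. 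By \eqref{eq:Cond ABC}, both sides are positive integers of size $O((A^k+BC)BU)$. We therefore get the integer equation
\[
(f(a_1)+b_1c_1)\,b_2u_2-(f(a_2)+b_2c_2)\,b_1u_1 = pt, \qquad |t|\ll \frac{(A^k+BC)BU}{p}.
\]
It suffices to show that, for each fixed $t$, the number of solutions is at most $ACU(A+B)p^{o(1)}$. Summing over the $O((A^k+BC)BU/p+1)$ values of $t$ then gives the lemma.

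\textbf{Step 2: the count for fixed $t$.} I would fix $(a_1,c_1,u_1)$, giving $ACU$ choices, and then aim for $p^{o(1)}(A+B)$ on average over the remaining variables $a_2,b_1,b_2,c_2,u_2$. Rearranged, the equation reads
\[
b_2\bigl((f(a_1)+b_1c_1)u_2-b_1c_2u_1\bigr)=f(a_2)\,b_1u_1+pt .
\]
I would split into two branches.

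\emph{Branch (i).} Fix $a_2$ and $b_1$. If the right-hand side $N=f(a_2)b_1u_1+pt$ is nonzero, the divisor bound \eqref{eq:tau} gives $p^{o(1)}$ choices for $b_2$ and for the cofactor $m=(f(a_1)+b_1c_1)u_2-b_1c_2u_1$. The linear equation $m=(f(a_1)+b_1c_1)u_2-b_1c_2u_1$ in $(u_2,c_2)$ then has $O(1+U/B)$ or $O(1+C/B)$ solutions, according to which variable we solve for. This needs to be balanced against the $AB$ choices of $(a_2,b_1)$.

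\emph{Branch (ii).} Fix $b_1,b_2,c_2,u_2$ instead and read the equation as a polynomial congruence $u_1b_1f(a_2)\equiv -pt \bmod b_2$ in $a_2$. After removing common factors this falls under $J_k(q)$ for $q\mid b_2$. The number of admissible $a_2\sim A$ is then $\ll (A/q+1)J_k(q)$, and Lemma~\ref{lm:Aver Jkq} controls the average over $q$.

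The intended outcome is that the two branches produce the terms $A^2CU$ and $ABCU$ respectively, reflecting the factor $(A+B)$. Degenerate cases have to be treated separately: when $N=0$, and when the cofactor vanishes. These force strong relations such as $f(a_2)b_1u_1=-pt$, which is impossible for $t=0$, or determine $b_1$ up to $p^{o(1)}$ choices.

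\textbf{Main obstacle.} The hardest part is organising the divisor counting so that exactly $ACU(A+B)$ emerges per value of $t$, without a stray factor such as $B^2$ or $U$. The issue is the coupling between $b_1$ and $b_2$ through the term $b_2c_2b_1u_1$. My first attempt would be to extract $g=\gcd(b_1,b_2)$ and reduce the equation modulo $b_1/g$ and modulo $b_2/g$. This turns the problem into a congruence $f(a_2)\equiv \lambda \bmod b_2/g$ (and symmetrically for $a_1$), counted via $J_k$ and Lemma~\ref{lm:Aver Jkq}, with the remaining linear variables $(c_2,u_2)$ determined by a lattice count. If this does not close, I would fall back on fixing $u_2$ as well and treating $c_2$ via a congruence modulo $b_1$.
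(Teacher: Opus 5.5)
Your Step 1 is fine (apart from the small point that the two sides are only guaranteed to be nonzero, not positive). Step 2, however, carries the whole content of the lemma, and it is not carried out. The scheme you set up cannot give $ACU(A+B)$ per value of $t$.

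\textbf{Where it fails.} The problem is the choice of fixed variables $(a_1,c_1,u_1)$. This choice straddles the two sides of $(f(a_1)+b_1c_1)b_2u_2=(f(a_2)+b_2c_2)b_1u_1+pt$. The natural modulus for counting $a_2$ is $b_2$, but the congruence $f(a_2)b_1u_1+pt\equiv 0 \bmod{b_2}$ involves the unfixed $b_1$. The natural way to pin down $b_1$ is the divisibility $b_1\mid f(a_1)b_2u_2-pt$, which needs $b_2$ and $u_2$, not $u_1$. As a result both of your branches overcount:
\begin{itemize}
\item In branch (i), the fixed tuple $(a_1,c_1,u_1,a_2,b_1)$ already ranges over $A^2BCU$ values. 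This exceeds the target by a factor $\asymp\min(A,B)$. In addition, the count $O(1+U/B)$ for the linear equation in $(u_2,c_2)$ ignores $\gcd(f(a_1)+b_1c_1,\,b_1u_1)$, which can be large.
\item In branch (ii), you have fixed every variable except $a_2$. So $f(a_2)$ is determined by the equation, the $J_k$ count does no work, and the number of fixed tuples is $AB^2C^2U^2$.
\end{itemize}
Also, the factor $A+B$ does not come from two separate branches. It appears in one stroke as $\sum_{b_2\sim B}(A/b_2+1)J_k(b_2)\le (A+B)p^{o(1)}$.

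\textbf{The missing idea.} Fix $t$ together with $a_1$, $c_1$, $b_2$, $u_2$, i.e.\ the variables on the left-hand side, at a cost of $ACU\cdot B$ per $t$.
\begin{itemize}
\item Reducing modulo $b_1$ gives $b_1\mid f(a_1)b_2u_2-pt$. This is nonzero for every $t$, not only $t=0$, because $p\nmid f(a_1)b_2u_2$; here $0<|f(a_1)|<p$ uses both $A\ge 1/\gamma_f$ and $A^k\le\gamma_f p$. So \eqref{eq:tau} leaves $p^{o(1)}$ choices of $b_1$.
\item Next, $u_1\mid (f(a_1)+b_1c_1)b_2u_2-pt=(f(a_2)+b_2c_2)b_1u_1\neq 0$, which leaves $p^{o(1)}$ choices of $u_1$.
\item Now $f(a_2)+b_2c_2$ equals a known integer $T$. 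The congruence $f(a_2)\equiv T\bmod{b_2}$ confines $a_2$ to $\ll (A/b_2+1)\,d\,J_k(b_2/d)$ values, where $d=\gcd(b_2,\mathfrak f)\ll 1$ and $\mathfrak f$ is the content of $f$. After that, $c_2$ is determined.
\item Summing over $b_2\sim B$ with Lemma~\ref{lm:Aver Jkq} replaces the factor $B$ by $(A+B)p^{o(1)}$.
\end{itemize}
This gives $ACU(A+B)p^{o(1)}$ per $t$, and hence the lemma. By the symmetry $1\leftrightarrow 2$ you could equally fix $a_2,c_2,b_1,u_1$; what breaks the argument is pairing $a_1,c_1$ with $u_1$.
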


\begin{proof}
Denote by $\fR$ the second moment in question. Then
\begin{align*}
\fR
&= \ssum_{\substack{a_1,a_2\sim A,\, b_1,b_2\sim B, \, c_1,c_2\sim C, \, u_1,u_2\sim U\\ (f(a_1)\overline{b_1}+c_1)u_2\equiv (f(a_2)\overline{b_2}+c_2)u_1\bmod p}}1\\
&= \ssum_{\substack{a_1,a_2\sim A,\, b_1,b_2\sim B, \, c_1,c_2\sim C, \, u_1,u_2\sim U\\ (f(a_1)+b_1c_1)b_2u_2\equiv (f(a_2)+b_2c_2)b_1u_1\bmod p}}1,
\end{align*} 
where there is an implicit restriction that $p\nmid (f(a_1)+b_1c_1)(f(a_2)+b_2c_2)$, due to the prescribed sizes of involved variables and the fact that we are summing over $1\leqslant x\leqslant p-1$.
Moreover, the above congruence equation can be rephrased by the Diophantine equation
\begin{equation}\label{eq:Diophantine-rho}
(f(a_1)+b_1c_1)b_2u_2= (f(a_2)+b_2c_2)b_1u_1 + rp,\qquad  0\leqslant |r|\leqslant R,
\end{equation} 
with
\begin{equation}\label{eq:R-bound}
0<R \ll (A^k+BC)BU/p.
\end{equation}

It is  useful to emphasise that
\begin{equation}\label{eq:maincase}
(f(a_1)+b_1c_1)(f(a_2)+b_2c_2)\neq 0
\end{equation}
according to the above restrictions. Similarly, we also have
\[
f(a_1)b_2u_2-rp\neq0,  \mand (f(a_1)+b_1c_1)b_2 u_2 -rp\neq0.
\]
We analyse the equation~\eqref{eq:Diophantine-rho} by considering divisibility. 

\underline{Step 1:} Fix $a_1$, $b_2$, $u_2$ and $r$. We see from~\eqref{eq:Diophantine-rho}  that
\[b_1\mid (f(a_1)b_2u_2-rp)\neq0.
\] 
By the divisor function bound, there are at most  $p^{o(1)}$ choices for $b_1$.

\underline{Step 2:} We now have $a_1$, $b_1$, $b_2$, $u_2$ and $r$, and further fix
$c_1$. These give at most  $p^{o(1)}$ choices for $u_1$ since
\[ 
u_1\mid ( (f(a_1)+b_1c_1)b_2 u_2 -rp)\neq0.
\]

\underline{Step 3:} 
We now define
\[
t =\frac{(f(a_1)+b_1c_1)b_2 u_2 -rp}{b_1u_1}\in\Z,
\]
which only depends on the already fixed parameters $a_1$, $b_1$, $b_2$, $c_1$, $u_1$, $u_2$ and $r$.
We are now led to count the number of solutions to the equation
\begin{equation}\label{eq:lastequation-rho} 
f(a_2)+b_2 c_2 =t,
\end{equation}
in unknowns $a_2\sim A$, $c_2\sim C$.
Interpreting~\eqref{eq:lastequation-rho} as a congruence modulo $b_2/d$, where $d \ll 1$ is the greatest common divisor of the  coefficients of $f$ and $b_2$,   the number of choices for $a_2$ is bounded by
\[
\sum_{a_2\sim A\atop f(a_2)\equiv t \bmod {b_2}} 1 \leq d \left( \frac A{b_2} +1\right) J_k (b_2/d),
\]
after which the last remaining variable $c_2$ is uniquely defined.

\underline{Step 4:} 
Recall the ranges of  $a_1$, $  c_1$, $u_2$, $r$ and also that $b_1$ and $u_1$ 
contribute  by a factor $ p^{o(1)}$ to the total count. Summing
over $b_2$,  we derive 
 \[
\fR \leq p^{o(1)} ACU(1+R) \left(\frac{A}{B}+1\right) \sum_{b_2 \sim B}\gcd(b_2,\ff) J_k(b_2/\gcd(b_2,\ff)),
\]
 where $\ff$ denotes the greatest common divisor of all coefficients of $f$.
Note that the last sum over $b_2$ is at most
 \[ \sum_{d\mid \ff}d \sum_{\substack{b_2 \sim B\\ d\mid b_2}} J_k(b_2/d)
\leq \sum_{d\mid \ff}d \sum_{b_2 \sim B/d} J_k(b_2)
\leq Bp^{o(1)}, 
\]
by Lemma~\ref{lm:Aver Jkq}. This implies
\[
 \fR\le ACU(1+R) (A+B)p^{o(1)}.
\]
Recalling the bound~\eqref{eq:R-bound} we complete the proof. 
\end{proof}

\begin{lemma}\label{lm:theta-2moment}
 Let  $f\in \Z[X]$ be a fixed polynomial of degree $k\ge 2$.
Assume that $A,B,C\geqslant2$ satisfy~\eqref{eq:Cond ABC}  with a sufficiently small $\gamma_f > 0$
and also $U \le p/73$. Then we have
\[
\sum_{x_1, x_2=1}^{p-1} \vartheta(x_1,x_2)^2
\le  A^2BCU(A+B) \( \frac{(A^k+BC)BU}{p} + 1\)^2 p^{o(1)} .
\] 
\end{lemma}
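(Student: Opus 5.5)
Expanding the square, the sum counts solutions of the system
\[
(f(a_1)\overline{b_1}+c)u'\equiv (f(a_3)\overline{b_3}+c')u,\qquad (f(a_2)\overline{b_2}+c)u'\equiv (f(a_4)\overline{b_4}+c')u \bmod p,
\]
in $a_1,\dots,a_4\sim A$, $b_1,\dots,b_4\sim B$, $c,c'\sim C$, $u,u'\sim U$. The implicit condition $x_1,x_2\ne0$ guarantees that the relevant quantities $f(a_i)+b_ic$, and their primed analogues, are nonzero modulo $p$. We follow the proof of Lemma~\ref{lm:rho-2moment}. Clearing denominators turns the two congruences into two Diophantine equations
\[
(f(a_1)+b_1c)b_3u'=(f(a_3)+b_3c')b_1u+r_1p,\qquad (f(a_2)+b_2c)b_4u'=(f(a_4)+b_4c')b_2u+r_2p,
\]
with $|r_1|,|r_2|\le R\ll (A^k+BC)BU/p$ as in~\eqref{eq:R-bound}. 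The nonvanishing statements analogous to~\eqref{eq:maincase} again follow from~\eqref{eq:Cond ABC}.

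The target bound has the shape $A\cdot\bigl[ACU(A+B)(1+R)\bigr]\cdot B(1+R)$. So we treat the first equation exactly as in Lemma~\ref{lm:rho-2moment}, obtaining the factor $ACU(A+B)(1+R)p^{o(1)}$ for the variables $(a_1,a_3,b_1,b_3,c,c',u,u')$. For the second equation we then use only a divisor bound plus trivial counting, and pay $AB(1+R)p^{o(1)}$ for $(a_2,a_4,b_2,b_4,r_2)$.

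\textbf{First equation.} Fix $a_1,b_3,u'$ and $r_1$. Then $b_1\mid f(a_1)b_3u'-r_1p\ne0$, so there are $p^{o(1)}$ choices for $b_1$. Next fix $c$; then $u\mid (f(a_1)+b_1c)b_3u'-r_1p\ne0$, which gives $p^{o(1)}$ choices for $u$. Now $f(a_3)+b_3c'=t$ with $t$ already determined. Counting $a_3$ via $f(a_3)\equiv t \bmod b_3$ (up to the content $\ff$ of $f$) gives at most $(A/b_3+1)J_k(b_3/d)$ choices, and then $c'$ is determined. Summing over $b_3$ with Lemma~\ref{lm:Aver Jkq} yields $ACU(1+R)(A+B)p^{o(1)}$ for this block.

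\textbf{Second equation.} Now $c,c',u,u'$ are all fixed. Fix additionally $a_2$, $b_4$ and $r_2$, which costs $AB(1+R)$. Then $b_2$ divides $f(a_2)b_4u'-r_2p$; this is nonzero, as in Lemma~\ref{lm:rho-2moment}. Hence $b_2$ takes $p^{o(1)}$ values. Then $f(a_4)$ is uniquely determined from the equation, and it is nonzero modulo $p$ because $x_2\ne0$. Since $f$ has degree $k$, this leaves at most $k$ values of $a_4$. Multiplying the two blocks gives
\[
A^2BCU(A+B)(1+R)^2p^{o(1)},
\]
which is the claim.

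The main obstacle is checking the nonvanishing conditions: that $f(a_2)b_4u'-r_2p\ne0$ and that the quantities in the first step are nonzero. Each follows, as in the earlier lemma, because a vanishing would force $p\mid f(a_2)$ or $p\mid(f(a_i)+b_ic)$, which~\eqref{eq:Cond ABC} excludes. The degenerate case where $f(a_4)=t'$ has many solutions cannot occur, since $f$ is a fixed nonconstant polynomial.
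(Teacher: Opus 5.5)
Your proof is correct and is essentially the paper's argument. The first equation is handled identically: divisor bounds for $b_1$ and $u$, then the $J_k$ count for $(a_3,c')$ averaged over $b_3$ via Lemma~\ref{lm:Aver Jkq}. In the second equation you mirror the paper's choice, fixing $a_2,b_4,r_2$ and using $b_2\mid f(a_2)b_4u'-r_2p$, where the paper fixes $a_4,b_2,t_2$ and uses $b_4\mid f(a_4)b_2u+t_2p$; both give the same factor $AB(1+R)p^{o(1)}$.

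One small point: the remark that $f(a_4)\not\equiv0$ ``because $x_2\ne0$'' is misattributed, since $x_2\ne0$ only gives $f(a_4)+b_4c'\not\equiv0$. It is not needed anyway, because $f$ being nonconstant already bounds the number of $a_4$ by $k$.
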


\begin{proof} 
Denote by $\fC$ the second moment in question.
Hence
 $\fC$ is bounded by the number of solutions to the system of congruences
\[
\begin{cases}
(f(a_1)\overline{b_1} + c)v\equiv (f(a_3)\overline{b_3} + d)u \bmod p,\\
(f(a_2)\overline{b_2} + c)v\equiv (f(a_4)\overline{b_4} + d)u\bmod p, 
\end{cases}
\]
with $a_1,a_2,a_3,a_4\sim A$, $b_1,b_2,b_3,b_4\sim B$, $c,d\sim C$, and $u,v\sim U$, subject to the constraint
\begin{equation}\label{eq:allcoprime to p}
p\nmid (f(a_1) +b_1 c)(f(a_2) +b_2 c)(f(a_3) +b_3 d)(f(a_4) +b_4 d).
\end{equation}
This constraint is due to the prescribed sizes of involved variables and the fact that we are summing over $1\leqslant x_1,x_2\leqslant p-1$.

Change the above congruences to  Diophantine equations:
\begin{equation} \label{eq: cong2eq}
\begin{cases} 
(f(a_1) +b_1 c)b_3v= (f(a_3) +b_3 d)b_1u+t_1p,\\
(f(a_2) +b_2 c)b_4v= (f(a_4) +b_4 d)b_2u+t_2p,
\end{cases} 
\end{equation}
where the integer variables $t_1$ and $ t_2$ satisfy $0\leqslant |t_1|,|t_2|\le  T$ with
\begin{equation}\label{eq:T-bound}
0<T\ll (A^k+BC)BU/p.
\end{equation}
Note that 
\[
(f(a_1) b_3v-t_1p)(f(a_4)b_2u+t_2p)( (f(a_1) +b_1 c)b_3v-t_1p)\neq0,
\]
given the ranges of the involved variables.

\underline{Step 1:}
We first fix $a_1$, $b_3$, $c$, $v$ and $t_1$.
It follows from the first equation of~\eqref{eq: cong2eq} that
\begin{equation} \label{eq: div}
b_1\mid(f(a_1) b_3v-t_1p)\neq0 \mand u\mid ( (f(a_1) +b_1 c)b_3v-t_1p)\neq0.
\end{equation}
This allows only at most $p^{o(1)}$ choices for $b_1$, $u$.

\underline{Step 2:}
We now have $a_1$, $b_1$, $b_3$, $c$, $u$, $v$, $t_1$.
Note that the first   equation in~\eqref{eq: cong2eq} can be reformulated as
\begin{equation} \label{eq: a3 and d}
f(a_3) +b_3 d=\frac{(f(a_1) +b_1 c)b_3v-t_1p}{b_1u},
\end{equation}
where the right hand side is now fixed already, and we seek the solutions in $a_3$, $d$.  Interpreting~\eqref{eq: a3 and d} as a congruence equation modulo $b_3$, the number of choices of $a_3$, $d$ is then at most  $O((A/B+1) \gcd(b_3,\ff) J_k(b_3/\gcd(b_3,\ff)))$,
just as argued in the proof of the preceding lemma. Here,  $\ff$ still denotes the greatest common divisor of all coefficients of~$f$.

\underline{Step 3:}
We still have $a_1$, $b_1$, $b_3$, $c$, $u$, $v$, $t_1$  and choose $a_3$, $d$ with a number of choices as discussed above. Now fix $a_4$, $b_2$, $t_2$ further.

From the second equation in~\eqref{eq: cong2eq}  we infer
\[
b_4\mid (f(a_4)b_2u+t_2p)\neq0.
\] 
This implies that $b_4$ takes at most $p^{o(1)}$ possible values. 
Finally, $a_2$ is determined up to $k$ choices given all other variables given as before.

\underline{Step 4:}
Up to now, we have considered all the 14 variables in~\eqref{eq: cong2eq}.
Collecting all above arguments, we obtain
\begin{align*}
\fC & \le \(\frac{A}{B}+1\) p^{o(1)}
\sum_{ |t_1| \le  T} \sum_{a_1 \sim A} \sum_{b_2  \sim B} \sum_{c\sim C} 
\sum_{v\sim U}  \sum_{  |t_2|\le  T} \sum_{a_4 \sim A} \\
&\qquad \qquad\qquad\qquad\qquad  \times\sum_{b_3\sim B} \gcd(b_3,\ff) J_k(b_3/\gcd(b_3,\ff)).
\end{align*}
As argued in the proof of Lemma~\ref{lm:rho-2moment}, the innermost sum over $b_3$ is at most $Bp^{o(1)}$. Hence
\[
\fC
\leqslant (1+T)^2(A+B)A^2BCUp^{o(1)}.
\]
The result  now follows immediately by recalling the bound~\eqref{eq:T-bound} for $T$.
\end{proof}

\section{Proof of Theorem~\ref{thm:TypeI} and Corollary~\ref{cor:TypeI}}
\label{sec:TypeI-proof}

\subsection{Amplification via bilinear shifts}  
\label{sec:bilin shift}
Recalling the definition~\eqref{eq:charactersum} we write 
 \[
\cC_{f,\chi} (\balpha,\bbeta) =\sum_{a \sim A} \sum_{b \sim B} \alpha(a,b)\chi(b) \sum_{c \sim C} \chi(f(a) \overline{b} +  c).
\]
 We fix two  parameters $U, V\ge 1$ with 
\[
 UV = \frac{1}{4} C. 
\]

We may attach a weight function to mimic the indicator function of the interval $[C,2C]$, and as argued in~\cite[Equations~(4.3) and~(4.4)]{FM98}, we make a shift $c\rightarrow c+uv$ with $u\sim U$ and $v\sim V$.
By Fourier inversion, we then arrive at
\begin{equation}
\label{eq:S T}
\cC_{f,\chi} (\balpha,\bbeta) \ll \frac{\log p}{UV} \fS,
 \end{equation}  
where, for some $\zeta \in \R$, 
\begin{align*}
\fS & = \sum_{a \sim A} \sum_{b \sim B}  \sum_{c \sim C} \sum_{u \sim U}
\Bigl| \sum_{v \sim V} \chi(f(a)\overline{b}+c+uv) \e(\zeta v)\Bigr|\\
& = \sum_{a \sim A} \sum_{b \sim B}  \sum_{c \sim C} \sum_{u \sim U}
\Bigl| \sum_{v \sim V} \chi((f(a)\overline{b}+c)\overline{u}+v) \e(\zeta v)\Bigr|.
\end{align*}
Note that the terms with $f(a)+bc=0$ contributes to $\fS$ at most $O(AUVp^{o(1)})$. We henceforth consider the contributions from $f(a)+bc\neq0$, in which case we automatically have $p\nmid (f(a)+bc)$.

For $x\in\Z$, denote by $\varrho(x)$ the number of solutions to the equation
\[
f(a)\overline{b}+c\equiv ux\bmod p
\]
in $(a,b,c,u)\in\Z^4$ with $a\sim A,\ b\sim B, \ c \sim C, \ u \sim U$.
Hence
\[
\fS \leqslant  \sum_{x=1}^{p-1} \varrho(x) \Bigl| \sum_{v \sim V} \chi(x+v) \e(\zeta v)\Bigr|+AUVp^{o(1)}.
\]
From H\"older's inequality, it follows that
\[
\fS  \leqslant \fS_1^{1-\frac{1}{r}}(\fS_2 \fS_3)^{\frac{1}{2r}}+AUVp^{o(1)}
\]
for all $r\in\Z^+$,
where
\[
\fS_j  = \sum_{x=1}^{p-1} \varrho(x)^j,\qquad  j=1,2,
\]
and
\[
\fS_3  = \sum_{x=1}^{p-1} \Bigl| \sum_{v \sim V} \chi(x+v) \e(\zeta v)\Bigr|^{2r}.
\]

We are now in a good position to apply~\eqref{eq:rho-theta-1moment} to $\fS_1$, Lemma~\ref{lm:rho-2moment} to $\fS_2$ and Lemma~\ref{lm:DavenportErdos} to $\fS_3$, so that
\begin{align*}
\fS \leqslant  (ABCU)^{1-\frac{1}{2r}}(V^rp+V^{2r}p^{\frac{1}{2}})^{\frac{1}{2r}} \(\frac{A}{B}+1\)^{\frac{1}{2r}} \( \frac{(A^k+BC)BU}{p} + 1\)^{\frac{1}{2r}}&p^{o(1)}\\
+AU&Vp^{o(1)}.
\end{align*}

\subsection{Optimisation and deriving Theorem~\ref{thm:TypeI}} 
Taking 
\[
V=p^{\frac{1}{2r}} \mand U=\frac{1}{4}Cp^{-\frac{1}{2r}}
\] 
(and noting that  $U, V  \ge 1$), we arrive at
\begin{align*}
|\cC_{f,\chi} (\balpha,\bbeta) |
& \le  (ABC)^{1-\frac{1}{2r}}U^{-\frac{1}{2r}}p^{\frac{1}{4r}+o(1)}\(\frac{A}{B}+1\)^{\frac{1}{2r}} \( \frac{(A^k+BC)BU}{p} + 1\)^{\frac{1}{2r}} +Ap^{o(1)}\\
& \le  (AB)^{1-\frac{1}{2r}}C^{1-\frac{1}{r}}p^{\frac{r+1}{4r^2}+o(1)}\(\frac{A}{B}+1\)^{\frac{1}{2r}} \( \frac{(A^k+BC)BC}{p^{1+\frac{1}{2r}}} + 1\)^{\frac{1}{2r}} +Ap^{o(1)}\\
& \le  (AB)^{1-\frac{1}{2r}}C^{1-\frac{1}{r}}p^{\frac{r+1}{4r^2}+o(1)}\(\frac{A}{B}+1\)^{\frac{1}{2r}} \( \frac{(A^k+BC)BC}{p^{1+\frac{1}{2r}}} + 1\)^{\frac{1}{2r}}.
\end{align*}
This completes the proof of Theorem~\ref{thm:TypeI}.

\subsection{Deriving  Corollary~\ref{cor:TypeI}}
Taking $A, B, C = p^{\eta+o(1)}$ in Theorem~\ref{thm:TypeI}, yields
\[
\Delta_1 \le p^{-\zeta + o(1)}
\]
where 
\begin{align*}
\zeta & = \frac{\eta}{r} + \frac{\eta}{r} -  \frac{r+1}{4r^2} -  \frac{1}{2r} \max\{(k+2)\eta  -1 -\frac{1}{2r}, 0\}  \\
& = \frac{2\eta}{r} -  \frac{r+1}{4r^2} -  \frac{1}{2r} \max\{(k+2)\eta  -1 -\frac{1}{2r}, 0\}  .
\end{align*}
Thus, either 
\[
\zeta  = \frac{2\eta}{r} -  \frac{r+1}{4r^2} -  \frac{1}{2r}\((k+2)\eta  -1 -\frac{1}{2r}\) 
=   \frac{1}{4r}  -  \frac{(k-2)\eta}{2r}  ,
\]
or 
\[
\zeta  = \frac{2\eta}{r} -  \frac{r+1}{4r^2} 
=   \frac{8\eta-1}{4r} -  \frac{1}{4r^2} .
\]
In either case $\zeta > 0$ as long as the condition~\eqref{eq:Cond N1}
holds and $r$ is large enough.

  \section{Proof of Theorem~\ref{thm:TypeII} and Corollary~\ref{cor:TypeII}}
  \label{sec:TypeII-proof}
  
 \subsection{Amplification via bilinear shifts}   
Assume that $\|\balpha\|_\infty,\|\bbeta\|_\infty \leqslant 1$.
Before we apply bilinear shifts, as in Section~\ref{sec:bilin shift} we use 
 the Cauchy--Schwarz inequality to remove absolute values. Thus,  we have
\begin{align*}
|\cC_{f,\chi} (\balpha,\bbeta)|^2
&\leqslant C\sum_{c\sim C}\Bigl|\sum_{a \sim A} \sum_{b \sim B}\alpha(a,b) \chi(f(a) + bc)\Bigr|^2\\
&= C\sum_{a_1,a_2 \sim A} \sum_{b_1,b_2 \sim B}\alpha(a_1,b_1)\overline{\alpha(a_2,b_2)}   \sum_{c\sim C}\chi(f(a_1) + b_1c)\overline{ \chi(f(a_2) + b_2c)}.
\end{align*}

Following a similar argument as in Section~\ref{sec:TypeI-proof}, 
we introduce the shift $c\rightarrow c+uv$, and sum over $u\sim U$, $v\sim V$ with
\[
UV=\frac{1}{4}C,
\]
so that
\begin{align*}
|\cC_{f,\chi} (\balpha,\bbeta)|^2
&\ll \frac{C\log p}{UV}\sum_{a_1,a_2 \sim A} \sum_{b_1,b_2 \sim B}\sum_{c\sim C}\sum_{u\sim U}\\
&\qquad \qquad \qquad \quad  \times \Bigl|\sum_{v\sim V}\e(\zeta v)\chi(f(a_1)\overline{b_1} + c+uv)\overline{ \chi(f(a_2) \overline{b_2} + c+uv)}\Bigr|\\
&= \frac{C\log p}{UV}\sum_{a_1,a_2 \sim A} \sum_{b_1,b_2 \sim B}\sum_{c\sim C}\sum_{u\sim U}\\
&\qquad \qquad \qquad \quad  \times \Bigl|\sum_{v\sim V}\e(\zeta v)\chi((f(a_1)\overline{b_1} + c)\overline{u}+v)\overline{ \chi((f(a_2)\overline{b_2} + c)\overline{u}+v)}\Bigr|, 
\end{align*}
for some $\zeta\in\R$. 
If $f(a_1)+b_1c=0$, then there are at most $p^{o(1)}$ choices for $b_1$, $c$ after $a_1\sim A$ is given. Hence the terms with $(f(a_1)+b_1c)(f(a_2)+b_2c)=0$ 
contribute to the  right hand side at most $A^2BCp^{o(1)}$.
We henceforth consider the terms with $(f(a_1)+b_1c)(f(a_2)+b_2c)\neq0$, which yields 
$p\nmid (f(a_1)+b_1c)(f(a_2)+b_2c)$ automatically,  given the sizes of all variables.

For $x_1,x_2\in\Z$, denote by $\vartheta(x_1,x_2)$ the number of solutions of the system
\[
f(a_1)\overline{b_1} + c\equiv ux_1\bmod p,\qquad f(a_2)\overline{b_2} + c\equiv ux_2\bmod p,
\]
in  the variables $a_1,a_2 \sim A$, $b_1,b_2 \sim B$,  $c\sim C$ and $u\sim U$. In this way, we may write
\[
|\cC_{f,\chi} (\balpha,\bbeta)|^2
\ll \frac{C\log p}{UV}\sum_{x_1,x_2=1}^{p-1}\vartheta(x_1,x_2)\Bigl|\sum_{v\sim V}\chi(x_1+v)\overline{ \chi(x_2+v)}\Bigr|  +A^2BCp^{o(1)}.
\]

From H\H{o}lder's inequality, it follows that
\[
|\cC_{f,\chi} (\balpha,\bbeta)|^2
\ll \frac{C\log p}{UV}\fY_1^{1-\frac{1}{r}}(\fY_2\fY_3)^{\frac{1}{2r}}+ A^2BCp^{o(1)}
\]
with
\[
\fY_j=\sum_{x_1,x_2=1}^{p-1}\vartheta(x_1,x_2)^j,\qquad  j=1,2,
\]
and
\[
\fY_3=\sum_{x_1,x_2=1}^{p-1}\Bigl|\sum_{v\sim V}\chi(x_1+v)\overline{ \chi(x_2+v)}\Bigr|^{2r}.
\]

We are now in a good position to apply~\eqref{eq:rho-theta-1moment} to $\fY_1$, Lemma~\ref{lm:theta-2moment} to $\fY_2$ and Lemma~\ref{lm:DavenportErdos2} to $\fY_3$, so that
\begin{align*}
|\cC_{f,\chi} (\balpha,\bbeta)|^2
&\leqslant \frac{Cp^{o(1)}}{UV}(A^2B^2CU)^{1-\frac{1}{2r}}(V^rp^2+V^{2r}p)^{\frac{1}{2r}} \(\frac{A}{B}+1\)^{\frac{1}{2r}}\\
&\qquad \qquad \qquad \qquad \qquad \qquad \quad  \times \( \frac{(A^k+BC)BU}{p} + 1\)^{\frac{1}{r}}+ A^2BCp^{o(1)}.
\end{align*}

\subsection{Optimisation and deriving Theorem~\ref{thm:TypeII}} 
Taking 
\[V=p^{\frac{1}{r}} \mand U=\frac{1}{4}Cp^{-\frac{1}{r}},
\]
we get
\begin{align*}
|\cC_{f,\chi} (\balpha,\bbeta)|^2
&\leqslant (ABC)^{2-\frac{1}{r}}p^{\frac{1}{2r}+\frac{1}{2r^2}+o(1)}\(\frac{A}{B}+1\)^{\frac{1}{2r}}\\
&\qquad \qquad \qquad \qquad \times \( \frac{(A^k+BC)BU}{p} + 1\)^{\frac{1}{r}}+A^2BCp^{o(1)}\\
&\leqslant (ABC)^{2-\frac{1}{r}}p^{\frac{1}{2r}+\frac{1}{2r^2}+o(1)}\(\frac{A}{B}+1\)^{\frac{1}{2r}}\\
&\qquad \qquad \qquad \qquad \times \( \frac{(A^k+BC)BC}{p^{1+\frac{1}{r}}} + 1\)^{\frac{1}{r}}+A^2BCp^{o(1)}\\
&\leqslant (ABC)^{2-\frac{1}{r}}p^{\frac{1}{2r}+\frac{1}{2r^2}+o(1)}\(\frac{A}{B}+1\)^{\frac{1}{2r}}\\
&\qquad \qquad \qquad \qquad  \times \( \frac{(A^k+BC)BC}{p^{1+\frac{1}{r}}} + 1\)^{\frac{1}{r}}.
\end{align*}
This completes the proof of Theorem~\ref{thm:TypeII}.

\subsection{Deriving  Corollary~\ref{cor:TypeII}}
In this case, taking $A, B, C = p^{\eta+o(1)}$ in Theorem~\ref{thm:TypeII}, yields
\[
\Delta_2 \le p^{-\zeta + o(1)}
\]
where 
\[
\zeta  = \frac{3\eta}{2r}  -  \frac{r+1}{4r^2} -  \frac{1}{2r} \max\{(k+2)\eta  -1 -\frac{1}{r}, 0\}  .
\]
Thus, either 
\[
\zeta  = \frac{3\eta}{2r} -  \frac{r+1}{4r^2} -  \frac{1}{2r}\((k+2)\eta  -1 -\frac{1}{r}\) 
=   \frac{r+1}{4r^2}  -  \frac{(k-1)\eta}{2r}  ,
\]
or 
\[
\zeta  = \frac{3\eta}{2r} -  \frac{r+1}{4r^2} 
=   \frac{6\eta-1}{4r} -  \frac{1}{4r^2} .
\]
In either case $\zeta > 0$ as long as the condition~\eqref{eq:Cond N2}
holds and $r$ is large enough.

\section{Comments} 

As a by-product, Ganguly and Rajan~\cite{GR23} 
initiated the study of $2\times2$ matrices which have irreducible characteristic polynomials. One may ask similar questions for $n\times n$ matrices with $n\geqslant3$, but we do not know how to approach this via character sums as argued in the current paper.

Denote by $\cI_n(p)$ the set of all $n\times n$ matrices with entries in $\F_p$, of 
irreducible characteristic polynomial, and by $\cI_n(N,p)$ the set consisting all elements in $\cI_n(p)$ with entries falling into $[1,N]$.
While there is no feasible approach to estimating $\cI_n(N,p)$ via character sums, one can notice that the irreducibility 
of the characteristic polynomial of any square matrix $A$ over any field is preserved under scalar multiplication $A \mapsto \lambda A$ 
with any $\lambda \ne 0$.
This allows us to use classical ideas of Korobov~\cite{Kor63,Kor67} on the distribution of vectors $(\lambda a_1, \ldots, \lambda a_k)$, $\lambda \in \F_p^*$, 
on average over vectors  $( a_1, \ldots,   a_k) \in \F_p^k$ (applied with $k = n^2$ in our setting).  It is reasonable to  expects that this 
approach yields the bound of the 
form 
\[
\sharp \cI_n(N,p)  = \frac{(2N)^{n^2}}{p^{n^2}} \sharp \cI_n(p) + O(p^{n^2-1}( \log p)^{n^2}).
\]
We also note that it is easy to derive from a very special case of 
a result of Reiner~\cite[Theorem~2]{Re61} that $\sharp \cI_n(p) = n^{-1} p^{n^2} + O(p^{n^2-1})$.

\section{Acknowledgements}
This work was completed during the Program ``Fouvry-73'' held in the Bernoulli Centre of EPF Lausanne during the summer of 2026. We thank the organisers of this four-week program, and the hospitality and wonderful working environment are greatly appreciated. I.S. would also like to thank the Max Planck Institute for 
Mathematics, Bonn, for its support and excellent working conditions during the
preparation of this work.

During the preparation of this work, I.S. was supported in part by the  Australian Research Council Grant  DP230100534, and P.X. by Shaanxi NSF (No. 2025JC-QYCX-002) and Shaanxi Fundamental Science Research Project for Mathematics and Physics (No.25JSZ007).

\end{document}